\documentclass[a4paper, reqno, 11pt]{amsart}

\usepackage[english]{babel}
\usepackage{amsmath}
\usepackage{mathrsfs}
\usepackage{amssymb}
\usepackage{amsthm}
\usepackage{enumerate}
\usepackage{ifthen}
\usepackage{bbm}
\usepackage{color}
\provideboolean{shownotes} 
\setboolean{shownotes}{true}
\usepackage{hyperref}
\usepackage{graphicx}
\usepackage{pgfplots}
\usepackage{tikz,tikz-3dplot} 
\usepackage{mathtools}
\usepackage{caption}[2004/07/16]

\pgfplotsset{compat=1.8}

\newcommand{\margnote}[1]{
\ifthenelse{\boolean{shownotes}}%
{\marginpar{\raggedright\tiny\texttt{#1}}}%
{}%
}

\newcommand{\hole}[1]{
\ifthenelse{\boolean{shownotes}}%
{\begin{center} \fbox{ \rule {.25cm}{0cm}
\rule[-.1cm]{0cm}{.4cm} \parbox{.85\textwidth}{\begin{center}
\texttt{#1}\end{center}} \rule {.25cm}{0cm}}\end{center}}
{}
}
\newtheorem{thm}{Theorem}[section]
\newtheorem*{thm1}{Theorem 1.1}
\newtheorem*{thm2}{Theorem 1.2}
\newtheorem{prop}[thm]{Proposition}

\newtheorem{cor}[thm]{Corollary}

\newtheorem{exm}[thm]{Example}
\theoremstyle{definition}
\newtheorem{defn}[thm]{Definition}

\newcommand{\e}{\varepsilon}		       
\newcommand{\R}{\mathbb{R}}

\newcommand{\N}{\mathbb{N}}

\newcommand{\dive}{\mathop{\mathrm {div}}}

\newcommand{\sgn}{\text{sgn}}
\newcommand{\x}{X_1^\varepsilon}
\newcommand{\y}{X_2^\varepsilon}
\newcommand{\z}{X_3^\varepsilon}
\newcommand{\de}{\mathrm{d}}

\numberwithin{equation}{section}

%

\begin{document}

\title[Smooth approximation is not a selection principle]{Smooth approximation is not a selection principle for the transport equation with rough vector field}

\author[G. Ciampa]{Gennaro Ciampa}
\address[G. Ciampa]{GSSI - Gran Sasso Science Institute\\ Viale Francesco Crispi 7 \\67100 L'Aquila \\Italy \& Department Mathematik und Informatik\\ Universit\"at Basel \\Spiegelgasse 1 \\CH-4051 Basel \\ Switzerland}
\email[]{\href{gennaro.ciampa@}{gennaro.ciampa@gssi.it},\href{gennaro.ciampa@2}{gennaro.ciampa@unibas.ch}}

\author[G. Crippa]{Gianluca Crippa}
\address[G. Crippa]{Department Mathematik und Informatik\\ Universit\"at Basel \\Spiegelgasse 1 \\CH-4051 Basel \\ Switzerland}
\email[]{\href{gianluca.crippa@}{gianluca.crippa@unibas.ch}}

\author[S. Spirito]{Stefano Spirito}
\address[S. Spirito]{DISIM - Dipartimento di Ingegneria e Scienze dell'Informazione e Matematica\\ Universit\`a degli Studi dell'Aquila \\Via Vetoio \\ 67100 L'Aquila \\ Italy}
\email[]{\href{stefano.spirito@}{stefano.spirito@univaq.it}}

\begin{abstract}
In this paper we analyse the selection problem for weak solutions of the transport equation with rough vector field. We answer in the negative the question whether solutions of the equation with a regularized vector field converge to a unique limit, which would be the selected solution of the limit problem. To this aim, we give a new example of a vector field which admits infinitely many flows. Then we construct a smooth approximating sequence of the vector field for which the corresponding solutions have subsequences converging to different solutions of the limit equation.
\end{abstract}

\maketitle

\section{Introduction}
Consider the Cauchy problem for the transport equation
\begin{equation}
\begin{cases}
\partial_t u(t,x)+ b(t,x) \cdot \nabla u(t,x)=0, \\
u|_{t=0}=u_0, 
\end{cases}
\label{eq:te}
\end{equation}
where $(t,x)\in (0,T)\times\R^{d}$ are the independent variables, with $T<\infty$, $b:(0,T)\times\R^{d}\to\R^{d}$ is a given divergence-free vector field and $u_{0}:\R^{d}\to\R$ is a given initial datum. The equation \eqref{eq:te} is connected with the system of ordinary differential equations 
\begin{equation}
\begin{dcases}
\frac{\de}{\de t}X(t,x)=b(t,X(t,x)), \\
X(0,x)=x,
\end{dcases}
\label{eq:ode}
\end{equation} 
where the unknown $X:(0,T)\times\R^{d}\to\R^{d}$ is referred to as the {\em flow} of the vector field $b$. The aim of this paper is to study possible selection criteria for the uniqueness of solutions of \eqref{eq:te} in a setting of low regularity.\\
The transport equation \eqref{eq:te} is classically well-posed when the vector field and the initial datum are smooth. Specifically, assume that the vector field $b$ is globally Lipschitz, then existence and uniqueness of smooth solutions with Lipschitz initial data can be proved by exploiting the connection between \eqref{eq:te} and \eqref{eq:ode} and the fact that \eqref{eq:ode} is well-posed. However, mainly due to the applications to fluid dynamics and conservation laws, the setting of smooth vector fields is too restrictive and a theory in weaker regularity settings has been developed in the last decades. In this paper we give a new example of nonuniqueness and we provide a counterexample to a possible selection principle of a unique solution of \eqref{eq:te} and \eqref{eq:ode} with rough vector fields. In order to set the problem and to explain exactly our result we provide a brief overview of relevant previous results on the analysis of \eqref{eq:te}.
\subsection*{A short review of some previous results}
The theory of existence and uniqueness of solutions of \eqref{eq:te} and \eqref{eq:ode} in a smooth setting is based on the method of characteristics. Loosely speaking, suppose $b$ is a globally Lipschitz and divergence-free vector field, then the Cauchy-Lipschitz theorem ensures the existence of a unique measure-preserving flow $X$ solution of \eqref{eq:ode}. Then, $u(t,x)=u_{0}((X(t,\cdot))^{-1}(x))$ is a smooth solution of \eqref{eq:te}. Finally, a simple estimate of the difference of two possible solutions of \eqref{eq:te} starting with the same initial datum implies that such $u$ is also the unique solution of \eqref{eq:te}. In a nonsmooth setting the situation is much more complex.  The existence of bounded distributional solutions can be obtained by a simple approximation procedure requiring only integrability hypotesis on $b$. While the existence is obtained by standard arguments, the uniqueness of distributional solutions is much more difficult and require additional assumptions on the vector field. The first result in this direction is due to DiPerna and Lions \cite{DPL}, where the uniqueness of distributional solutions of \eqref{eq:te} is proved under the hypothesis that $b$ has Sobolev regularity and bounded divergence. The result in \cite{DPL} has been extended in the highly non trivial case of $BV$ vector fields with bounded divergence by Ambrosio in \cite{Am}. Furthermore, Bianchini and Bonicatto in \cite{BB} have recently shown uniqueness in the case of a nearly incompressible $BV$ vector field, without assumptions on the divergence, giving a positive answer to the Bressan's compactness conjecture, see \cite{B}.
In all these uniqueness results the key point is to assume a control on one full derivative of the vector field in some weak sense. Several counterexamples to the uniqueness are available in the case of less regular vector fields. In particular, based on a counterexample of Aizemann \cite{Ai}, Depauw in \cite{DeP} showed an example of a divergence-free vector field $b\in L^{1}((\e,T);BV(\R^{2}))$ for any $\e>0$, but not in $L^{1}((0,T);BV(\R^{2}))$, for which the Cauchy problem \eqref{eq:te} with $u_0=0$ admits a nontrivial bounded solution. In \cite{ABC14, ABC13} the authors give an example of an autonomous divergence-free vector field which belongs to $C^{0,\alpha}(\R^2)$ for every $\alpha<1$, for which uniqueness of bounded solutions fails. Exploiting convex integrations techniques, examples of nonuniqueness of bounded solutions of \eqref{eq:te} are provided also in \cite{CGSW} for bounded and divergence-free vector fields. Nonuniqueness of weak solutions with integrability lower than the one considered in \cite{DPL} is shown in \cite{MS,MS2,MS3}.
Finally, a very important counterexample for the purpose of this paper is the one of DiPerna and Lions, \cite{DPL}, where they consider a divergence-free vector field $b\in W^{s,1}_{\mathrm{loc}}(\R^{2})$ for every $s<1$ which admits two different measure preserving flows.
\subsection*{The problem of selection}
In order to state and motivate the counterexample presented in this paper, we illustrate in some more detail the proof of existence of bounded distributional solutions of the problem \eqref{eq:te}. We assume that the datum $u_0$ is smooth since this assumption does not affect the analysis. Suppose that $b$ is a divergence-free vector field in $L^p((0,T);L^p(\R^d))$. A very common and natural approximation of the transport equation is obtained by considering a sequence of smooth vector fields $\{b_{\e}\}_{\e}$ with divergence uniformly bounded in $\e$ and converging strongly in $L^p((0,T);L^p(\R^d))$ to $b$. 
Then, since for each fixed $\e$ the vector field $b_{\e}$ is smooth, there exists a unique solution $u_{\e}$ of the Cauchy problem
\begin{equation}\label{eq:tee}
\begin{cases}
\partial_t u^\e+b_\e\cdot\nabla u^\e=0,\\
u_{\e}|_{t=0}=u_0.
\end{cases}
\end{equation}
Using the explicit formula for smooth solutions and by standard compactness arguments, {\em up to a subsequence}, there exists at least a weak-star limit $u\in L^{\infty}((0,T);L^\infty(\R^d))$, which is a distributional solution of \eqref{eq:te}. Of course, as for all compactness arguments, the previous proof gives no information on the uniqueness since there is a passage to subsequences. A natural question is therefore the following:
\begin{itemize}
\item[(Q1)]\emph{Does the approximation procedure obtained by smoothing the vector field select a unique solution of \eqref{eq:te}?}
\end{itemize}
In this paper we give a negative answer to the above question in the three dimensional case. First of all consider the set 
$$
\mathcal{I}=\left\{f\in C^\infty(\R^3): f=f(r,\theta,z) \mbox{ with } f(r,\theta_1,z)\neq f(r,\theta_2,z) \mbox{ if }\theta_1\neq\theta_2 \right\},
$$
where $(r,\theta,z)$ denotes the cilindrical coordinates in $\R^3$.
Our main theorem is the following:
\begin{thm}\label{teo:main1}
There exist an autonomous divergence-free vector field $b\in L_{\mathrm{loc}}^{p}(\R^{3})$ with $p\in [1,\frac{4}{3}]$ and a sequence of divergence-free vector fields $b_n\in C^\infty(\R^{3})$ converging to $b$ strongly in $L^p_{\mathrm{loc}}(\R^3)$ such that the following holds. Let $u_0\in \mathcal{I}\cap L^\infty(\R^3)$ be a given inital datum, then there exist subsequences $n_i$ and $n_j$ such that the sequences $u_{n_i}$ and $u_{n_j}$, solutions of \eqref{eq:tee} with initial datum $u_0$, converge in $L^\infty((0,T);L^\infty(\R^3)-w*)\cap L^\infty((0,T);L^1_{\mathrm{loc}}(\R^3))$ to two different limits, which are bounded distributional solutions of \eqref{eq:te}.
\end{thm}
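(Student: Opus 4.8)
\emph{Strategy.} The plan is to deduce the failure of selection from an \emph{instability of the flow} under smoothing. For smooth $b_n$ the unique solution of \eqref{eq:tee} is $u_n(t,\cdot)=u_0\circ(X^n_t)^{-1}$, where $X^n$ is the flow of $b_n$; and by the defining property of $\mathcal I$, two flows that differ, on a positive-measure set, in the angular variable produce transported solutions that differ. So it suffices to build: (i) an autonomous divergence-free $b$, with $b\in L^p_{\mathrm{loc}}(\R^3)$ only for $p\le\frac43$, admitting infinitely many measure-preserving flows that coincide meridionally and differ in the angular component; and (ii) a single smooth divergence-free sequence $b_n\to b$ in $L^p_{\mathrm{loc}}$ whose (necessarily unique) flows $X^n$ do not converge, but have two subsequential limits among the flows of $b$.

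\emph{The vector field.} In cylindrical coordinates $(r,\theta,z)$ I would seek $b$ of the form: a swirl-free axisymmetric part, coming from a meridional stream function, plus a purely azimuthal term $v(r,z)\,e_\theta$, which makes $\dive b=0$ automatically; and I would engineer it so that the meridional trajectories form a globally defined measure-preserving flow in which every trajectory of a fixed region hits a Lebesgue-null singular set $\Sigma$ at a common time $t_*$, while the azimuthal drift $v/r$ is \emph{not integrable} up to $t_*$ along these trajectories. Past $t_*$ the angular coordinate is then undetermined, and fixing a re-injection rule — for instance one angular shift $\lambda$ applied to every trajectory at time $t_*$ — yields an infinite family $\{X^\lambda\}$ of measure-preserving flows of $b$, all with the same meridional motion and pairwise distinct in $\theta$ on a positive-measure set. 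The strength of the singularities of the stream function and of $v$ needed for this finite-time, non-integrable behaviour, together with $\dive b=0$, is what pushes $b$ out of every Sobolev space and, in the explicit construction, exactly into $L^p_{\mathrm{loc}}$ for $p\le\frac43$. Producing a single $b$ with all of these properties at once is one of the two main difficulties.

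\emph{Instability under smoothing.} Given such a $b$, I would take $b_n=b*\rho_{1/n}$, still divergence-free, with $b_n\to b$ strongly in $L^p_{\mathrm{loc}}$, and study the unique smooth flow $X^n$. Off $\Sigma$ the meridional part of $X^n$ converges to the (unique) meridional flow of $b$ by a Gronwall stability estimate on compact sets, and the time trajectories spend in a small neighbourhood of $\Sigma$ is controlled uniformly in $n$. In the azimuthal variable, instead, a trajectory under $b_n$ grazes $\Sigma$ at distance $\sim 1/n$ rather than hitting it, and thus accumulates a \emph{finite but unboundedly growing} total rotation $W_n$, with $W_n\to\infty$ and $W_{n+1}-W_n\to0$; since (by the common arrival time) $W_n$ is essentially the same for all trajectories, the effective re-injection shift is $W_n\bmod 2\pi$, which has no limit. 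Choosing subsequences $n_i,n_j$ with $W_{n_i}\to\alpha_1$ and $W_{n_j}\to\alpha_2\not\equiv\alpha_1\pmod{2\pi}$, one obtains $X^{n_i}\to X^{(1)}$ and $X^{n_j}\to X^{(2)}$ locally in measure, with $X^{(1)},X^{(2)}\in\{X^\lambda\}$ the flows corresponding to the shifts $\alpha_1,\alpha_2$. Making this winding estimate rigorous is the other main difficulty.

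\emph{Conclusion.} Since $u_n(t,\cdot)=u_0\circ(X^n_t)^{-1}$ and $\|u_n\|_{L^\infty}=\|u_0\|_{L^\infty}$, convergence of the inverse flows along the two subsequences gives $u_{n_i}\to u^{(1)}:=u_0\circ(X^{(1)}_t)^{-1}$ and $u_{n_j}\to u^{(2)}:=u_0\circ(X^{(2)}_t)^{-1}$ in $L^\infty((0,T);L^1_{\mathrm{loc}}(\R^3))$ and weakly-$*$ in $L^\infty((0,T);L^\infty(\R^3))$; passing to the limit in the weak form of \eqref{eq:tee}, where $b_nu_n\to bu$ in $\mathcal D'$ because $b_n\to b$ strongly in $L^p_{\mathrm{loc}}$ against the weakly-$*$ convergent bounded $u_n$, shows that $u^{(1)},u^{(2)}$ are bounded distributional solutions of \eqref{eq:te}. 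They are distinct: for $t>t_*$, on the positive-measure set of points whose trajectory has met $\Sigma$, the maps $(X^{(1)}_t)^{-1}$ and $(X^{(2)}_t)^{-1}$ return values with the same $(r,z)$ but angular coordinate differing by $\alpha_1-\alpha_2\not\equiv0\pmod{2\pi}$, and $u_0\in\mathcal I$ separates such points. The meridional stability off $\Sigma$, the negligibility of $\Sigma$, and the limit passage in the equation are, by contrast, routine.
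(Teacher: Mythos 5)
Your overall architecture (explicit nonunique flows differing by an angular shift, $u_n=u_0\circ (X^n_t)^{-1}$, separation of the limits via $\mathcal I$, and the routine passage to the limit in the weak formulation) matches the paper. But the core of your argument has a genuine gap, and it sits exactly where you place the "other main difficulty": you take $b_n=b*\rho_{1/n}$ and assert that the total winding $W_n$ of trajectories of the mollified field near the singular set satisfies $W_n\to\infty$ with $W_{n+1}-W_n\to 0$, uniformly over trajectories, so that $W_n \bmod 2\pi$ has several accumulation points. Nothing in your proposal establishes this, and it is not a technicality: it requires quantitative control of how close the trajectories of $b*\rho_{1/n}$ pass to $\Sigma$, of the saturation of the mollified azimuthal component at that scale, and of the uniformity of the accumulated rotation over a positive-measure set of initial points. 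The paper explicitly singles out the mollification procedure as question (Q2) and states that it \emph{cannot} be answered with its techniques; your proposal is in effect an (unproved) resolution of (Q2), which is strictly harder than the theorem you are asked to prove. A secondary issue is that your limit field is required to have a swirl $v\,e_\theta$ with $\int v/r\,\de t=+\infty$ along trajectories, so the trajectories spiral infinitely before reaching $\Sigma$; you must then still check that $|b|$ (not $|b|/r$) is integrable along trajectories so that the $X^\lambda$ are genuine absolutely continuous regular Lagrangian flows, and that such a $v$ is compatible with $b\in L^p_{\mathrm{loc}}$ only for $p\le\frac43$.

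The paper avoids the winding problem entirely by \emph{not} using a single canonical regularization. Its field $b$ has no azimuthal component at all: trajectories in the upper paraboloid $P^+$ reach the origin in finite time and may exit into $P^-$ rotated by an arbitrary fixed angle $\Theta$, giving the family $X^\Theta$. For each target $\Theta$ it then builds a bespoke approximation $b^\Theta_\varepsilon$ by truncating the paraboloids and inserting a thin cylinder of height $\sim\varepsilon$ in which the field rotates at speed $\sim\varepsilon^{-2}$ for a time $\sim\varepsilon^2$, so that the total rotation is \emph{finite and equal to $\Theta$ for every $\varepsilon$} (the free parameter $\beta=\sqrt{27\Theta/32}$ prescribes it exactly); the flows $X^\varepsilon$ and their inverses are computed explicitly and converge uniformly to $X^\Theta$ at rate $\sqrt\varepsilon$. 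Since $b^\Theta_\varepsilon$ is only $BV_{\mathrm{loc}}$, a second layer of approximation mollifies it into $C^\infty$ fields $b^\Theta_{\varepsilon,l}$, and Ambrosio's well-posedness together with the stability theorem for $BV$ fields gives convergence as $l\to\infty$ for fixed $\varepsilon$; a diagonal argument and the interleaving of the two families for $\Theta\neq\Phi$ produce the single sequence $b_n$ with two subsequential limits. If you want to salvage your route, you would need to prove the winding estimate for the mollified field; otherwise you should replace the single mollification by a two-parameter approximation of the paper's type, in which the rotation accumulated near the singular set is prescribed rather than divergent.
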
 
The above theorem is a consequence of the following analogous result for the flow:
\begin{thm}\label{teo:main2}
There exists a divergence-free vector field $b\in L_{\mathrm{loc}}^{p}(\R^{3})$ with $p\in [1,\frac{4}{3}]$ and a sequence of divergence-free vector fields $b_n\in C^\infty(\R^{3})$  such that $b_n\to b$ strongly in $L_{\mathrm{loc}}^{p}(\R^{3})$ and the uniquely defined sequence $X^n$ of flows of $b_n$ does not converge, but has at least two different subsequences converging in $L^\infty((0,T);L^1_{\mathrm{loc}}(\R^3))$ to two different flows of $b$.
\end{thm}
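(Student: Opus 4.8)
The plan is to build both $b$ and the approximating fields $b_n$ within the class of \emph{axisymmetric vector fields with swirl} that the definition of $\mathcal{I}$ points to. Writing points of $\R^3$ in cylindrical coordinates $(r,\theta,z)$, with $e_r,e_\theta,e_z$ the associated frame, I would look for $b=b_r(r,z)\,e_r+\omega(r,z)\,e_\theta+b_z(r,z)\,e_z$, independent of $\theta$, with the poloidal part generated by a stream function $\psi=\psi(r,z)$ via $r\,b_r=\partial_z\psi$ and $r\,b_z=-\partial_r\psi$. Every such $b$ is divergence-free; its flow preserves $\psi$, so the poloidal coordinates $(r,z)$ travel along the level curves of $\psi$ while the angular coordinate obeys $\dot\theta=\omega(r,z)/r$. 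The whole mechanism will be localised near the axis $\{r=0\}$, where in $\R^3$ an entire circle $\{r=0,\ z=\text{const}\}$ collapses to a point and $\theta$ ceases to be defined.

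I would then choose $\psi$ so that, in a suitable region, its level curves are arcs with both endpoints on the axis, and so that along such an arc the poloidal trajectory reaches the axis in finite time, with $r(t)\sim(\tau-t)^{\mu}$ as $t\to\tau^-$ for a suitable $\mu>0$, and is then continued along the same arc away from the axis. The swirl $\omega$ is chosen to blow up as $r\to0$ fast enough that $\omega/r$ is \emph{not} time-integrable along such a trajectory --- so the angular coordinate performs infinitely many revolutions before the poloidal part reaches the axis --- yet slowly enough that $b\in L^p_{\mathrm{loc}}(\R^3)$ for every $p\in[1,\tfrac{4}{3}]$; since near the axis $|b|$ has a dominant singularity of type $r^{-\kappa}$ and the volume element is $r\,\de r\,\de\theta\,\de z$, this amounts to $\kappa p<2$, and together with the collapse and winding requirements it pins down the admissible range of exponents. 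Because a poloidal trajectory that has reached the axis may be restarted into $\{r>0\}$ with \emph{any} angular coordinate, selecting this exit angle as a measurable function of the incoming trajectory yields a whole family of measure-preserving flows of $b$; among them I single out two flows $X$ and $\widetilde X$ that agree on the set of initial data whose trajectories never meet the axis but differ, on a set of positive measure, by a rotation of angle $\pi$ in $\theta$.

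Next I would define $b_n$ by mollifying/truncating $\psi$ and $\omega$ at a scale $\delta_n\to0$ in the variable $r$, preserving the axisymmetric-with-swirl structure, so that each $b_n$ is smooth and divergence-free and $b_n\to b$ in $L^p_{\mathrm{loc}}(\R^3)$ (the modification is supported where $r\lesssim\delta_n$, a region on which $b$ itself is $L^p$-small). For $b_n$ the classical, hence unique, flow $X^n$ of \eqref{eq:ode} is well defined. Along a trajectory that for $b$ would hit the axis, the regularised poloidal trajectory instead bottoms out at distance $\sim\delta_n$ from the axis, swings around and returns along (a perturbation of) the arc, spending there a time that tends to $0$; hence the poloidal components of $X^n$ converge, in $L^\infty((0,T);L^1_{\mathrm{loc}}(\R^3))$, to those of $X$ (equivalently of $\widetilde X$), and so does $\theta$ for initial data in the good set. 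The only surviving effect is that, for initial data in the bad set, the angular coordinate of $X^n$ is shifted by the total rotation $\Theta_n\approx\int\omega_n(r_n,z_n)/r_n\,\de t$ accumulated during the near-axis passage, and $\Theta_n\to\infty$. The core of the proof is to turn this into a precise statement: a (self-similar) analysis of the near-axis passage should give an expansion $\Theta_n=c\,\delta_n^{-\beta}+c_0+o(1)$ with $c,\beta>0$ and a constant $c_0$ independent of $n$, so that choosing the scales $\delta_n$ so that $c\,\delta_n^{-\beta}$ advances in steps of $\pi$ makes $\Theta_n$ oscillate, modulo $2\pi$, between two values that differ by $\pi$. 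Extracting the two corresponding subsequences $n_i$ and $n_j$ then gives $X^{n_i}\to X$ and $X^{n_j}\to\widetilde X$ in $L^\infty((0,T);L^1_{\mathrm{loc}}(\R^3))$, while the full sequence $X^n$ fails to converge, which is the assertion of the theorem.

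I expect this last, quantitative step to be the main obstacle. One must control the regularised flow near the axis uniformly in $n$ finely enough to ensure both that (i) the time each near-axis trajectory spends in $\{r\lesssim\delta_n\}$, and the drift of its poloidal coordinates there, vanish as $n\to\infty$ --- so that the \emph{only} obstruction to convergence of $X^n$ is the accumulated rotation --- and that (ii) $\Theta_n$ admits an asymptotic expansion accurate enough that its residue modulo $2\pi$ is a parameter that can be steered through the choice of the scales $\delta_n$ (and, if needed, of further parameters of the regularisation). Achieving (i)--(ii) forces the exponents in the construction --- the collapse rate $\mu$, the singularity $\kappa$, the mollification scale, and the constraint $p\le\tfrac{4}{3}$ --- to be balanced simultaneously, and this is the delicate point. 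A further and more routine verification is that $X$ and $\widetilde X$ are genuine flows of $b$, i.e.\ measurable maps solving \eqref{eq:ode} for a.e.\ initial datum; this follows by organising the correspondence between incoming and outgoing trajectories through the level curves of $\psi$, along which the poloidal motion is time-reversible.
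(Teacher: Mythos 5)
Your geometric skeleton is essentially the paper's: the limit field there is exactly an axisymmetric field with stream function $\psi=r^2/|z|$ supported on two paraboloids meeting at the origin, trajectories collapse to a point of the axis in finite time, and nonuniqueness comes from the freedom in the exit angle. But your mechanism for producing two distinct sublimits is genuinely different, and it is where the proof as proposed has a real gap. You put an infinitely winding swirl into the \emph{limit} field and hope to extract two subsequences by steering the divergent accumulated rotation $\Theta_n$ modulo $2\pi$ through the choice of the scales $\delta_n$. For this to work you need the expansion $\Theta_n(\mathbf{x})=c\,\delta_n^{-\beta}+c_0(\mathbf{x})+o(1)$ with the \emph{same} constants $c,\beta$ for almost every initial datum in the bad set, with the $o(1)$ uniform on a set of positive measure. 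This is not automatic and you do not establish it: the winding accumulated during the near-axis passage depends on which level curve of $\psi$ the trajectory follows, where along it the datum sits, and how the mollification interacts with the local profile of $\omega$ and $\psi$ near the touching point. If the leading coefficient $c=c(\mathbf{x})$ varies even continuously and non-constantly, then for \emph{no} subsequence does the phase $c(\mathbf{x})\delta_{n_k}^{-\beta}\bmod 2\pi$ converge for a.e.\ $\mathbf{x}$, so $X^{n_k}$ does not converge in $L^\infty((0,T);L^1_{\mathrm{loc}})$ to any flow; the whole extraction collapses. Declaring that the exponents ``must be balanced simultaneously'' does not close this; it is the heart of the matter.

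The paper avoids this difficulty entirely by keeping the limit field swirl-free and inserting a \emph{finite, trajectory-independent} rotation only in the approximation: a neighbourhood of the origin is replaced by a thin cylinder of height $\sim\e$ in which the vertical speed ($-\tfrac{27}{16\beta\e}$) and the angular speed ($1/\e^2$) are \emph{constants}, so every trajectory crosses it in the same time $\tfrac{32}{27}\beta^2\e^2$ and is rotated by exactly the same angle $\tfrac{32}{27}\beta^2$, tuned to any prescribed $\Theta$ via the free parameter $\beta$. One then builds one approximating family per target angle, proves uniform convergence of $X^\e$ to $X^\Theta$ with an explicit rate $\sqrt{\e}$, and interleaves two families with $\Theta\neq\Phi$. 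Two further points your write-up glosses over and the paper must handle: (i) the directly constructed $b_\e$ are only Lipschitz inside their support and $BV_{\mathrm{loc}}$ globally, so getting genuinely $C^\infty(\R^3)$ fields requires an extra mollification combined with a stability theorem for $BV$ vector fields and a diagonal argument; your single-mollification route would be cleaner here if the phase problem were solved; (ii) one must verify that the candidate limits are regular Lagrangian flows (measure preservation across the collapse time), which the paper does by explicit formulas. I would recommend either proving the uniform phase asymptotics for a concretely specified self-similar $(\psi,\omega)$, or switching to the paper's strategy of a designed constant rotation per approximating family.
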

For several PDEs, selection principles or admissibility criteria are needed when the regularity of weak solutions is not enough to guarantee uniqueness. For example, this is the case for scalar conservation laws: if we consider weak solutions satisfying in addition the entropy inequality it is possible to prove uniqueness. In the context of the incompressible Euler equations general admissibility criteria, that can be satisfied by only one weak solution, are not known when the initial datum $u_0\in L^2$. Contrary to the case of scalar conservation laws, criteria based on an energy inequality are known not to select a unique solution, as proved in \cite{DLS}. Another natural approach would be to consider weak solutions of Euler equations obtained as limit of Navier-Stokes equations. In this regard, in \cite{BTW} the authors prove that for shear-flow solutions of the Euler equations, the vanishing viscosity limit of Leray weak solutions of the Navier-Stokes equations selects a unique solution. On the other hand, the recent result in \cite{BV} shows that the limit of weak solutions of Navier-Stokes equations, which are not Leray weak solutions, does not select a unique solution. Therefore it is fair to say that there is not a clear picture of selection principles in fluid dynamics. Our result shows that, already for the linear transport equation, the very natural approximation procedure of smoothing the vector field does not select a unique solution.\par
It is worth pointing out that, differently from the nonuniqueness examples obtained via convex integration, the approximation constructed here is explicit and consists of functions $u^\e$ which are the unique exact solutions of \eqref{eq:tee}.\par In this spirit, the problem of selection for bounded solutions can also be posed for other types of approximations which guarantee uniqueness at the approximate level, such as
\begin{enumerate}
\item[(Q2)] \emph{Does the approximation procedure obtained by smoothing the vector field via a convolution with a suitably chosen mollifier select a unique solution of \eqref{eq:te}?} 
\vspace{0.3cm}
\item[(Q3)] \emph{Does the approximation procedure obtained by vanishing viscosity limit of}  
\begin{equation}\label{eq:tde}
\partial_t u^\e+b\cdot\nabla u^\e=\e\Delta u^\e,
\end{equation}
\emph{select a unique solution of \eqref{eq:te}?}
\end{enumerate}
Unfortunately we are not able to provide an answer to the two questions above with the techniques of this work. Nevertheless if one looks to a slightly different version of (Q3), considering $u^\e$ as the solution of
\begin{equation}\label{eq:tder}
\partial_t u^\e+b_{\e}\cdot\nabla u^\e=\e\Delta u^\e,
\end{equation}
in which we also regularize the vector field, an easy corollary of our main theorem exploiting a diagonal argument shows that there exists a vector field $b$ and a smooth approximation $b_{\e}$ for which the selection of a unique solution as limit of solutions of \eqref{eq:tder} does not hold, see Corollary \ref{cor:visc}.\\
We point out that in \cite{AF} the authors study the behaviour of solutions of \eqref{eq:te} in dimension one, constructed as limit of two different approximations: (i) $u^\e_{visc}$ solution of \eqref{eq:tde}; (ii) $u^\e_{stoc}$ solution of \eqref{eq:te} with a multiplicative noise of the form  $\e\nabla u\circ\frac{\de W(t)}{\de t}$ at the right hand side, where $W(t)$ is a one-dimensional Brownian motion. In particular, they show that in the limit the sequences $u^\e_{visc}$ and $u^\e_{stoc}$ converge to two different solutions of the limit equation.
\bigskip

\subsection*{Organization of the paper}
The paper is organized as follows. In Section~2 we recall some of the main notions and results that will be exploited in the sequel. In Section 3 we define the limit vector field, we introduce the regularizing sequence of vector fields, and we prove some of their main properties. Finally in Section 4 we prove our main results.

\section{Preliminaries and background}
We start by recalling some basic definitions. 
\begin{defn}[Distributional solutions]
Let $b\in L^1_{\mathrm{loc}}((0,T);L^1_{\mathrm{loc}}(\R^d;\R^d))$ be divergence-free and $u_0\in L^\infty_{\mathrm{loc}}(\R^d)$ be given. A function $u$ is called a distributional solution of ($\ref{eq:te}$) if $u\in L^\infty((0,T);L^\infty_{\mathrm{loc}}(\R^d))$ and

\smallskip

\[
\iint u(\partial_t\varphi+b\cdot\nabla\varphi )\de x\de t+\int u_0\varphi_{|_{t=0}} \de x=0,
\]
\smallskip
for any $\varphi\in C^\infty_c([0,T)\times\R^d)$.
\end{defn}
A very general existence theorem for weak solutions can be proved along the lines sketched in the introduction; we refer to \cite{DPL} for a detailed proof.

\smallskip

\begin{thm}
Let $b\in L^1_{\mathrm{loc}}((0,T);L^1_{\mathrm{loc}}(\R^d;\R^d)$ be divergence-free and $u_0\in L^\infty(\R^d)$. There exists a weak solution $u\in L^\infty([0,T];L^\infty(\R^d))$ of ($\ref{eq:te}$).
\end{thm}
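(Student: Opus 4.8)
The plan is to follow the classical scheme sketched in the Introduction: replace $b$ by a smooth divergence-free approximation, solve the regularized problem explicitly by the method of characteristics, and recover a solution of \eqref{eq:te} by a soft compactness argument. Fix a standard mollifier $(\rho^\e)_\e$ on $\R^d$ and set $b^\e:=b*_x\rho^\e$ and $u_0^\e:=u_0*\rho^\e$ (the regularization of the datum is immaterial and could be omitted). Then $b^\e$ is smooth in $x$, still divergence-free since convolution commutes with $\dive$, and $b^\e\to b$ in $L^1_{\mathrm{loc}}((0,T)\times\R^d)$; moreover $u_0^\e\in C^\infty(\R^d)$, $\|u_0^\e\|_{L^\infty}\le\|u_0\|_{L^\infty}$, and $u_0^\e\to u_0$ a.e. and in $L^1_{\mathrm{loc}}(\R^d)$. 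After a routine localization of $b^\e$ that preserves the divergence-free condition (via a matrix/stream potential) and makes the associated flow globally defined, carried out as in \cite{DPL} and which I suppress, classical ODE theory produces a measure-preserving flow $X^\e$ for $b^\e$, and
\[
u^\e(t,x):=u_0^\e\bigl((X^\e(t,\cdot))^{-1}(x)\bigr)
\]
solves \eqref{eq:tee}. Since $X^\e(t,\cdot)$ is a diffeomorphism one has the uniform bound $\|u^\e\|_{L^\infty((0,T)\times\R^d)}\le\|u_0\|_{L^\infty}$ for all $\e$.

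By Banach--Alaoglu there is a subsequence (not relabelled) and $u\in L^\infty((0,T)\times\R^d)$ with $\|u\|_{L^\infty}\le\|u_0\|_{L^\infty}$ and $u^\e\weaktos u$ in $L^\infty((0,T)\times\R^d)$; in particular $u\in L^\infty([0,T];L^\infty(\R^d))$. Each $u^\e$ satisfies, for every $\varphi\in C^\infty_c([0,T)\times\R^d)$,
\[
\iint u^\e\,(\partial_t\varphi+b^\e\cdot\nabla\varphi)\,\de x\,\de t+\int u_0^\e\,\varphi_{|_{t=0}}\,\de x=0,
\]
where $\dive b^\e=0$ was used to move the spatial derivative onto $\varphi$. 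We pass to the limit term by term: $\iint u^\e\,\partial_t\varphi\to\iint u\,\partial_t\varphi$ since $\partial_t\varphi\in L^1((0,T)\times\R^d)$; $\int u_0^\e\,\varphi_{|_{t=0}}\to\int u_0\,\varphi_{|_{t=0}}$ by dominated convergence; and $\iint u^\e\,b^\e\cdot\nabla\varphi\to\iint u\,b\cdot\nabla\varphi$ because $b^\e\cdot\nabla\varphi\to b\cdot\nabla\varphi$ strongly in $L^1((0,T)\times\R^d)$ (as $\nabla\varphi$ is bounded with compact support and $b^\e\to b$ in $L^1_{\mathrm{loc}}$) while $u^\e\weaktos u$ in $L^\infty$. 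Hence $u$ is a distributional solution of \eqref{eq:te}, which is the assertion.

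The only point requiring care is this last passage to the limit in the product $u^\e\,b^\e$: no strong compactness is available for the sequence $(u^\e)_\e$, so it is essential both that $b^\e\to b$ \emph{strongly} — which is why one regularizes $b$ rather than works with $b$ directly — and that the divergence-free hypothesis allow all derivatives to be transferred onto the test function, turning the term into a genuine weak-times-strong pairing. It is precisely the lack of strong compactness of $(u^\e)_\e$, and the consequent possible dependence of the limit on the chosen approximation, that is exploited in Theorems~\ref{teo:main1}--\ref{teo:main2}; everything else here is routine, and we refer to \cite{DPL} for the details.
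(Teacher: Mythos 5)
Your proposal is correct and follows exactly the scheme the paper itself indicates for this theorem (regularize $b$, solve by characteristics, uniform $L^\infty$ bound, weak-star compactness, and pass to the limit in the weak formulation using the strong $L^1_{\mathrm{loc}}$ convergence of $b^\e$ against the weak-star convergence of $u^\e$); the paper gives no independent proof but defers to the sketch in its introduction and to \cite{DPL}, which is the same route you take. The points you suppress (global definition of the flow, time regularity of the mollification) are exactly the ones the reference handles, so nothing is missing.
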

%
%
%
%

Next, we recall some results regarding the uniqueness of solutions of the transport equation \eqref{eq:te} and the associated ordinary differential equations \eqref{eq:ode}. We start with the notion of {\em regular Lagrangian flow}, introduced by Ambrosio in \cite{Am},
\begin{defn}
Let $b\in L^1((0,T);L^1_{\mathrm{loc}}(\R^d;\R^d))$ be given. We say that $X:(0,T)\times\R^d\rightarrow\R^d$ is a regular Lagrangian flow associated to $b$ if
\begin{enumerate}
\item for a.e. $x\in\R^d$ the map $t\mapsto X(t,x)$ is an absolutely continuous integral solution of the ordinary differential equation
\begin{equation}
\begin{dcases}
\frac{\de}{\de t}X(t,x)=b(t,X(t,x)), \\
X(0,x)=x,
\end{dcases}
\end{equation}
\item there exists a constant $L$ indipendent of $t$ such that
\begin{equation}\label{eq:incom}
X(t,\cdot)\# \mathscr{L}^d\leq L \mathscr{L}^d.
\end{equation}
\end{enumerate}
\end{defn}
With the notation $X(t,\cdot)\# \mathscr{L}^d$ we denote the push-forward measure of $\mathscr{L}^d$ through $X$ which is defined as
$$
X(t,\cdot)\#\mathscr{L}^d(A)=\mathscr{L}^d(X(t,\cdot)^{-1}(A)),\hspace{0.7cm}\mbox{for all Borel sets }A\subseteq\R^d.
$$
In the case of a divergence-free vector field, $L$ can be taken to be $1$ and \eqref{eq:incom} is an equality.
As already stressed in the introduction, in order to prove uniqueness of solutions more information on the regularity and on the growth of the vector field is needed. We recall the following theorem, proved in \cite{Am}:
\begin{thm}{\label{teo:am}}
Let $b\in L^1((0,T);BV_{\mathrm{loc}}(\R^d;\R^d))$ be a vector field satisfying $\dive b\in L^1((0,T);L^\infty(\R^d))$ and the growth condition
\[
\frac{|b(t, x)|}{1 + |x|} \in L^1((0,T);L^1(\R^d))+L^1((0, T);L^\infty(\R^d)).
\]
Then there exist:
\begin{itemize}
\item a unique bounded distributional solution of \eqref{eq:te};
\item a unique regular Lagrangian flow $X$ of $b$.
\end{itemize}
\end{thm}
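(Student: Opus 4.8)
The plan is to reproduce the argument of Ambrosio \cite{Am}, whose engine is the \emph{renormalization property}: once one knows that \emph{every} bounded distributional solution of \eqref{eq:te} is renormalized, all four assertions follow by now-standard manipulations. The existence of a bounded distributional solution is the soft part and has already been sketched in the introduction: mollify $b$ to smooth fields $b_\e\to b$ in $L^1_{\mathrm{loc}}$ with $\dive b_\e$ bounded in $L^1((0,T);L^\infty)$ uniformly in $\e$, solve \eqref{eq:tee} by characteristics to get $u_\e$ with $\|u_\e(t)\|_{L^\infty}\le\|u_0\|_{L^\infty}$, and pass to a weak-$*$ limit; the growth condition makes the construction global in space and the linearity of the equation lets one identify the limit as a distributional solution. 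For the uniqueness of bounded solutions, suppose $u_1,u_2$ are two of them with the same datum and set $w:=u_1-u_2$: then $w$ solves \eqref{eq:te} with zero datum, and by renormalization so does $w^2$ (take $\beta(s)=s^2$, approximated by bounded nonlinearities since $w\in L^\infty$). Rewriting $\partial_t w^2+\dive(b\,w^2)=(\dive b)\,w^2$, testing against functions of the form $\psi(t)\chi(x/R)$ and using the splitting $|b|/(1+|x|)=b_1+b_2$ with $b_1\in L^1_tL^1_x$, $b_2\in L^1_tL^\infty_x$ from the growth hypothesis, the flux term on the annulus $\{R\le|x|\le 2R\}$ is seen to vanish as $R\to\infty$; a Gronwall inequality in $t$ with coefficient $\|\dive b(t)\|_{L^\infty}$ then forces $w\equiv 0$.

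The heart of the matter is therefore to prove that every bounded distributional solution $u$ of \eqref{eq:te} is renormalized. Following DiPerna--Lions one regularizes, $u^\delta:=u*\rho_\delta$, obtaining
\[
\partial_t u^\delta+b\cdot\nabla u^\delta=r^\delta,\qquad r^\delta:=b\cdot\nabla u^\delta-\big(b\cdot\nabla u\big)*\rho_\delta,
\]
and since the chain rule is legitimate for the smooth function $u^\delta$ one has $\partial_t\beta(u^\delta)+b\cdot\nabla\beta(u^\delta)=\beta'(u^\delta)r^\delta$; the whole point is to show $r^\delta\to 0$ in $L^1_{\mathrm{loc}}$. When $b\in W^{1,1}_{\mathrm{loc}}$ this is the classical commutator lemma. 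For $b\in BV_{\mathrm{loc}}$ one must control the singular part of the derivative: writing $Db=D^ab\,\mathscr L^d+D^sb$, the absolutely continuous part is handled as in DiPerna--Lions, while for $D^sb$ one invokes \emph{Alberti's rank-one theorem}, which gives $D^sb=(\zeta\otimes\nu)\,|D^sb|$ with $\zeta(x),\nu(x)$ unit vectors $|D^sb|$-a.e. The key idea of Ambrosio is to choose the mollifier $\rho_\delta$ \emph{anisotropically}, via a measurable dependence on the direction $\nu(x)$, so that the contribution of $D^sb$ to the commutator is killed in the limit $\delta\to 0$. Passing to the limit then yields $\partial_t\beta(u)+b\cdot\nabla\beta(u)=0$ for every $\beta\in C^1\cap W^{1,\infty}$, i.e. renormalization, which as explained above closes the uniqueness statement for the PDE.

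For the regular Lagrangian flow one transfers the information from the PDE to the ODE. Existence: let $\mu_t$ be a nonnegative bounded distributional solution of the continuity equation $\partial_t\mu_t+\dive(b\mu_t)=0$ with $\mu_0=\mathscr L^d$, obtained by the approximation scheme above; by the superposition principle there is a measure $\boldsymbol\eta$ on $C([0,T];\R^d)$ concentrated on integral curves of \eqref{eq:ode} with $(e_t)_\#\boldsymbol\eta=\mu_t$, where $e_t$ denotes evaluation at time $t$. Disintegrating $\boldsymbol\eta=\int\boldsymbol\eta_x\,\de\mathscr L^d(x)$ with respect to $e_0$ and comparing, via the uniqueness of bounded solutions established above, the solution built from an arbitrary measurable selection of a single curve out of each $\boldsymbol\eta_x$ with $\mu_t$ itself, one concludes that $\boldsymbol\eta_x$ is a Dirac mass on one curve $\gamma_x$ for $\mathscr L^d$-a.e. $x$; setting $X(t,x):=\gamma_x(t)$ defines a regular Lagrangian flow, the bound \eqref{eq:incom} being inherited from the approximation with constant $L=\exp(\|\dive b\|_{L^1_tL^\infty_x})$ via Gronwall on $\dive b$. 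Uniqueness: if $X^1,X^2$ are two regular Lagrangian flows, the associated superposition measures both solve the same continuity equation with datum $\mathscr L^d$, so the same uniqueness together with the disintegration argument forces $X^1(t,\cdot)=X^2(t,\cdot)$ for $\mathscr L^d$-a.e. $x$ and every $t$.

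The one genuinely hard step is the renormalization property for $BV$ vector fields, and inside it the commutator estimate controlling the singular part of $Db$ through Alberti's rank-one theorem and the anisotropic mollifier: this is precisely the advance of \cite{Am} over \cite{DPL}. Everything else — existence of bounded solutions, the energy/cutoff argument for uniqueness at the level of the PDE, and the superposition-plus-disintegration transfer to the Lagrangian flow — is an adaptation of the DiPerna--Lions scheme, with the growth condition entering only to legitimize the global-in-space integrations and to prevent trajectories from escaping to infinity in finite time.
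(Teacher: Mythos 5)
The paper does not actually prove this statement: Theorem \ref{teo:am} is introduced with ``We recall the following theorem, proved in \cite{Am}'' and is used as a black box. Your sketch is a faithful outline of precisely that cited proof --- existence by mollification and weak-$*$ compactness, uniqueness of bounded solutions via the renormalization property (whose BV case rests on the anisotropic commutator estimate and Alberti's rank-one theorem), and the transfer to existence and uniqueness of the regular Lagrangian flow through the superposition principle and disintegration --- so it takes essentially the same route as the source the paper relies on.
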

For an alternative approach, based only on {\em a priori} estimates on the flow, we refer to \cite{CDL} for $W^{1,p}(\R^d)$ vector fields with $p>1$ and to \cite{CB},\cite{CNSS} for the case $p=1$ and vector fields the gradient of which is a singular integral of a function in $L^1(\R^d)$. This latter is a class of interest in the context of the $2D$ Euler equations. More recently, these results were improved in \cite{N} to vector fields which can be represented as singular integral of a function in $BV(\R^d)$. We conclude this section recalling the following stability theorem from \cite{N}.
\begin{thm}\label{teo:stab}
Let $b_n$ be a sequence of smooth vector fields converging in $L^1((0,T);L^1_{\mathrm{loc}}(\R^d))$ to a vector field $b\in L^1((0,T);BV_{\mathrm{loc}}(\R^d;\R^d))$, with $\dive b\in L^1((0,T);L^\infty(\R^d))$ and satisfying the growth condition
\[
\frac{|b(t, x)|}{1 + |x|} \in L^1((0,T);L^1(\R^d))+L^1((0, T);L^\infty(\R^d)).
\]
Assume that for some decomposition 
$$ 
\frac{|b_n(t, x)|}{1 + |x|}=\tilde{b}_{n,1}(t,x)+\tilde{b}_{n,2}(t,x)$$ we have
$$
\| \tilde{b}_{n,1} \|_{L^1((0,T);L^1(\R^d))} + \| \tilde{b}_{n,2} \|_{L^1((0, T);L^\infty(\R^d))} \leq C
$$
for all $n\in \N$ and for some constant $C$. Then the following statements hold true.
\begin{itemize}
\item Let $X_n$ and $X$ be the regular Lagrangian flows associated respectively to $b_n$ and $b$, denote with $L_n$ and $L$ the compressibility constants of the flows and assume that the sequence $L_n$ is bounded uniformly in $n$. Then, $X_n\to X$ locally in measure uniformly in time; that is, for every compact set $K\subset \R^d$
$$
\sup_{[0,T]}\int_K 1\wedge |X_n(t,x)-X(t,x)|\de x \to 0 \hspace{0.5cm} \mathrm{as} \ n\to +\infty.
$$
\item Let $u_0\in L^\infty(\R^d)$ and let $u_n$ be the weak solution of the Cauchy problem $(\ref{eq:te})$ with initial datum $u_0$ and vector field $b_n$. Then, $$u_n\to u \ in \ L^\infty((0,T);L^\infty(\R^d)-w*)\cap L^\infty((0,T);L^1_{\mathrm{loc}}(\R^d))$$ where $u$ is the unique solution of $(\ref{eq:te})$ with initial datum $u_0$ and vector field $b$.
\end{itemize}
\end{thm}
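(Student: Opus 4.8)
The plan is to prove both assertions by the quantitative Lagrangian stability technique of Crippa--De Lellis \cite{CDL}, in the borderline version for $BV$ (and singular-integral-of-$BV$) vector fields developed in \cite{N} (see also \cite{CB, CNSS}). Note first that, since $b\in BV_{\mathrm{loc}}$ has bounded divergence and satisfies the growth condition, Theorem~\ref{teo:am} already provides a \emph{unique} regular Lagrangian flow $X$ and a unique bounded solution $u$, so it is enough to prove that the full sequences converge. As a preliminary reduction, from the decomposition hypothesis $|b_n(t,x)|/(1+|x|)=\tilde b_{n,1}+\tilde b_{n,2}$ with uniformly bounded $L^1_tL^1_x$ and $L^1_tL^\infty_x$ norms, together with the uniform bound on the compressibility constants $L_n$, one obtains a logarithmic Gronwall estimate for $\int_{B_R}\log(1+|X_n(t,x)|)\,dx$ that is uniform in $n$ and $t$; the same holds for $X$. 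Consequently, for every $R>0$ there is $R'>0$, independent of $n$, with $X_n(t,B_R)\cup X(t,B_R)\subseteq B_{R'}$ on $[0,T]$; the inverse maps $X_n(t,\cdot)^{-1}$, being regular Lagrangian flows of reversed fields with the same constants, are localized in the same way. This confines the whole analysis to a fixed ball and makes the hypothesis $b_n\to b$ in $L^1((0,T);L^1_{\mathrm{loc}})$ exploitable.

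For $\delta>0$ set $\Phi_n(t)=\int_{B_R}\log\!\big(1+\delta^{-1}|X_n(t,x)-X(t,x)|\big)\,dx$, so that $\Phi_n(0)=0$. Differentiating and splitting $b_n(t,X_n)-b(t,X)=\big(b_n(t,X_n)-b_n(t,X)\big)+\big(b_n(t,X)-b(t,X)\big)$ gives
\[
\Phi_n'(t)\ \le\ \int_{B_R}\frac{|b_n(t,X_n)-b_n(t,X)|}{\delta+|X_n-X|}\,dx\ +\ \frac{1}{\delta}\int_{B_R}\big|b_n(t,X(t,x))-b(t,X(t,x))\big|\,dx .
\]
By the change of variables $x\mapsto X(t,x)$ and the compressibility of $X$, the second term is controlled by $\delta^{-1}L\,\|b_n(t,\cdot)-b(t,\cdot)\|_{L^1(B_{R'})}$, whose time integral tends to $0$ as $n\to\infty$ for each fixed $\delta$, by the preliminary reduction and the $L^1_{\mathrm{loc}}$ convergence. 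For the first term one invokes the pointwise estimate $|b_n(t,x)-b_n(t,y)|\le C|x-y|\big(M\nabla b_n(t,x)+M\nabla b_n(t,y)\big)$ (with a maximal operator suited to the regularity class at hand, following \cite{N}); after cancelling $|X_n-X|$ and changing variables with $L_n$, it is bounded by $C\int_{B_{R'}}M\nabla b_n(t,\cdot)\,dx$, up to a symmetric term.

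The main obstacle is this last bound: because $\nabla b_n$ is only bounded in $L^1$, $M\nabla b_n$ is merely weak-$L^1$ and $\int_{B_{R'}}M\nabla b_n$ need not be finite, so one cannot quote the strong $(p,p)$ bound for $M$ as in the $W^{1,p}$, $p>1$, case of \cite{CDL}. The device of \cite{N} (after \cite{CB, CNSS}) is a truncated/weak-type estimate: $M\nabla b_n$ is cut at a level tuned to $\delta$, its high part being reabsorbed into $\Phi_n$ at the cost of a logarithmic factor, while its low part is integrable with the uniform $L^1$ bound on $\nabla b_n$. This produces a closed inequality $\Phi_n'(t)\le g_n(t)\,\Phi_n(t)+h_n(t)$ with $\int_0^T g_n\le C(\delta)$ uniformly in $n$ (the constant degenerating only logarithmically as $\delta\to0$, using the uniform bounds on the local $BV$ norm of $b_n$ and on $\dive b_n$ available in this setting) and $\int_0^T h_n\le \delta^{-1}L\,\|b_n-b\|_{L^1((0,T);L^1(B_{R'}))}$. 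Since $\Phi_n(0)=0$, Gronwall gives $\sup_{[0,T]}\Phi_n\le e^{C(\delta)}\,\delta^{-1}L\,\|b_n-b\|_{L^1_tL^1(B_{R'})}$; choosing $\delta=\delta_n\to0$ slowly (legitimate since $C(\delta)$ grows only logarithmically) makes the right-hand side vanish, and then Chebyshev on $\Phi_n$ yields, for every $\gamma>0$,
\[
\sup_{[0,T]}\big|\{x\in B_R:\ |X_n(t,x)-X(t,x)|>\gamma\}\big|\ \le\ \frac{\sup_{[0,T]}\Phi_n(t)}{\log(1+\gamma/\delta_n)}\ \longrightarrow\ 0
\]
as $n\to\infty$, which is exactly local convergence in measure, uniform in time. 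Finally, for the transport solutions one uses the representations $u_n(t,\cdot)=u_0\circ X_n(t,\cdot)^{-1}$ and $u(t,\cdot)=u_0\circ X(t,\cdot)^{-1}$, the convergence in measure of the inverse flows, the uniform bound $\|u_n\|_{L^\infty}\le\|u_0\|_{L^\infty}$, and a density argument reducing to $u_0\in C_c$: this gives $u_n\to u$ in $L^\infty((0,T);L^1_{\mathrm{loc}})$ and, by the uniform $L^\infty$ bound, also weakly-$*$ in $L^\infty((0,T);L^\infty)$, with $u$ the unique solution provided by Theorem~\ref{teo:am}.
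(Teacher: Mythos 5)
The paper offers no proof of this statement: Theorem~\ref{teo:stab} is recalled verbatim from \cite{N} (building on \cite{Am,CDL,CB,CNSS}), so the only possible comparison is with the literature. Your sketch correctly identifies the general strategy --- uniqueness from Theorem~\ref{teo:am} to upgrade subsequential to full convergence, localization via the growth condition, the logarithmic functional $\Phi_n$, and Chebyshev at the end --- but it contains a gap that prevents the Gronwall inequality from closing under the stated hypotheses. You split $b_n(t,X_n)-b(t,X)$ as $\bigl(b_n(t,X_n)-b_n(t,X)\bigr)+\bigl(b_n(t,X)-b(t,X)\bigr)$, so the difference-quotient term is controlled by $M\nabla b_n$, and you then invoke ``uniform bounds on the local $BV$ norm of $b_n$ and on $\dive b_n$''. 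No such uniform bounds are assumed: the hypotheses give only $L^1((0,T);L^1_{\mathrm{loc}})$ convergence of $b_n$, the growth decomposition, and the uniform bound on the compressibility constants $L_n$; a sequence of smooth fields can converge in $L^1_{\mathrm{loc}}$ to a $BV$ field while $\|\nabla b_n\|_{L^1_{\mathrm{loc}}}$ blows up. The standard stability argument splits the other way, $\bigl(b_n(t,X_n)-b(t,X_n)\bigr)+\bigl(b(t,X_n)-b(t,X)\bigr)$: the first piece is estimated by $\delta^{-1}L_n\|b_n-b\|_{L^1}$ after composing with $X_n$, and the difference quotient then involves only the \emph{limit} field, whose $BV_{\mathrm{loc}}$ regularity is actually hypothesized.

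Even after this repair, the step you describe as ``cut $M\nabla b$ at a level tuned to $\delta$, reabsorb the high part logarithmically, integrate the low part with the uniform $L^1$ bound'' is precisely where the naive Crippa--De Lellis scheme is known to fail for $W^{1,1}$ and $BV$ fields: $M(Db)$ is only in weak-$L^1$, and the truncation/interpolation device of \cite{CB} works because an $L^1$ \emph{function} is equi-integrable on sets of small measure, whereas the singular part of the measure $Db$ is not. Handling that part requires the additional structure exploited in \cite{Am} (commutator estimates together with Alberti's rank-one theorem) or the anisotropic maximal-function estimates of \cite{N}. Deferring to \cite{N} is legitimate for a recalled theorem, but your description of what that reference actually does is inaccurate, and the argument as sketched would not survive the singular part of $Db$.
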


\par

\section{The vector field}
In this section we introduce the vector field $b$, which will be the limit of our approximation as stated in Theorems \ref{teo:main1} and \ref{teo:main2}. More precisely, we look for a vector field for which uniqueness of the flow fails. 

\subsection{A 2D example of DiPerna and Lions.} 
It is worth recalling the following example due to DiPerna and Lions \cite{DPL}.
\begin{exm}\label{eq:dpl}
Define the two dimensional vector field $b=(b_1,b_2)$ as
\begin{equation}\label{vdpl}
\begin{dcases}
b_1(x,y)= -\mathrm{sgn}(y) \left(\frac{x}{|y|^2} \chi_{ \{ |x|\leq |y|  \}}+ \chi_{ \{ |x|> |y|  \}}  \right), \\
b_2(x,y)=-\left(\frac{1}{|y|}\chi_{ \{ |x|\leq |y|  \}}+\chi_{ \{ |x|> |y|  \}} \right).
\end{dcases}
\end{equation}
The vector field $b\in W^{s,1}_{\mathrm{loc}}(\R^2;\R^2)$ for all $s\in[0,1)$, $\dive b=0$ in the sense of distributions, $b\in L^p+L^\infty$ for all $p\in [1,2)$.
We can define two different regular Lagrangian flows of $b$ that preserve the Lebesgue measure. In particular, they are different on the set $\{ (x,y)\in\R^2:0<x<y \}$ and are defined as follows 
\[
\begin{cases}
X_1(t,x,y)=\frac{x}{y}\sqrt{|y^2-2t|}, \\
X_2(t,y)=\sigma\sqrt{|y^2-2t|},
\end{cases}
\]
and
\[
\begin{cases}
\tilde{X}_1(t,x,y)=\sigma\frac{x}{y}\sqrt{|y^2-2t|}, \\
\tilde{X}_2(t,y)=\sigma\sqrt{|y^2-2t|},
\end{cases}
\]
where $\sigma=1$ if $t\leq y^2/2$ and $\sigma=-1$ if $t>y^2/2$.\\
\begin{figure}
\centering
\includegraphics[width=0.75\textwidth]{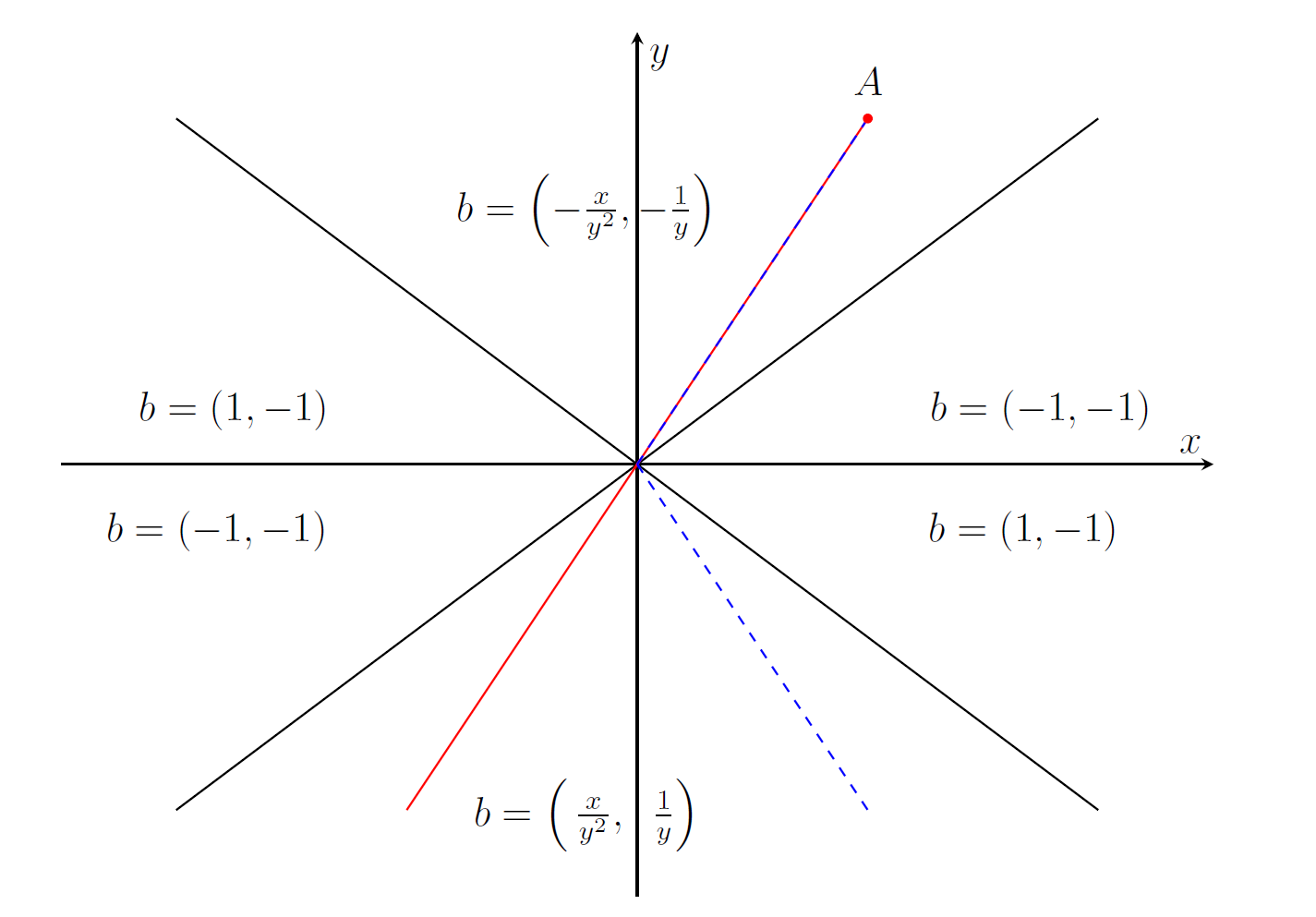}
\caption{The blue line is a characteristic of the flow $X$ while the red line is a characteristic of the flow $\tilde{X}$.}
\end{figure}
\begin{figure}
\centering
\includegraphics[width=0.75\textwidth]{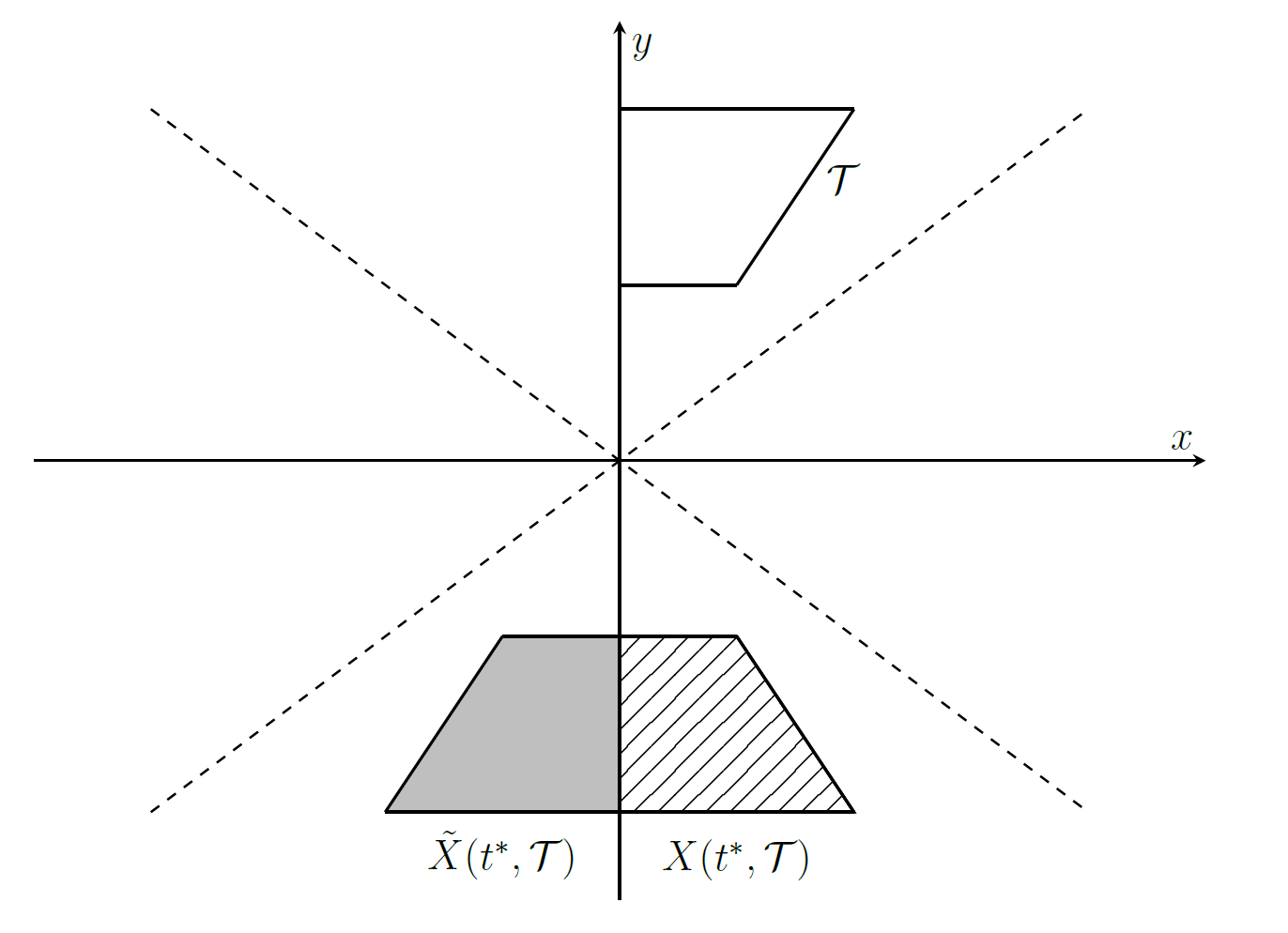}
\caption{Action of the flows on the trapezium $\mathcal{T}$.}
\end{figure}
The nonuniqueness of the flow has the following geometric interpetration: consider the trapezium $\mathcal{T}$ in the half plane $\{ y>0 \}$ as in Figure 2, then there exists a time $t^*$ such that 
\begin{itemize}
\item the region filled with diagonal lines is $X(t^*,\mathcal{T})$ and it is symmetric to $\mathcal{T}$ with respect to $\{y=0\}$;
\item the grey region is $\tilde{X}(t^*,\mathcal{T})$ and it is symmetric to $\mathcal{T}$ with respect to $(0,0)$.
\end{itemize}
\end{exm}
We would like to use this example to give a negative answer to (Q1). It is not a problem to construct a smooth approximation of \eqref{vdpl} which gives $X$ in the limit. Instead, it is not clear to us how to get $\tilde{X}$ in the limit: we are not able to construct an approximation $b_\e$ of \eqref{vdpl} avoiding intersections of trajectories for fixed $\e$. In order to avoid this topological problem, we rather work in three space dimensions.
\subsection{The limit vector field}
We now introduce the vector field
\begin{equation}
b(x,y,z)=
\begin{dcases}
\left(- \sgn(z) \frac{x}{|z|^2},- \sgn(z) \frac{y}{|z|^2}, - \frac{2}{|z|} \right)  & \mathrm{if} \ x\in P,\\ 
(0,0,0) & \mathrm{otherwise},
\end{dcases}
\label{eq:b}
\end{equation}
where $P\subset\R^{3}$ denotes the set 
\[
P=P^+\cup P^-=\{ (x,y,z)\in\mathbb{R}^3: x^2+y^2\leq z \}\cup \{\ (x,y,z)\in\mathbb{R}^3:x^2+y^2\leq -z\},
\]
being the union of two symmetric paraboloids.
\begin{figure}[h!]
\centering
\includegraphics[width=0.6\textwidth]{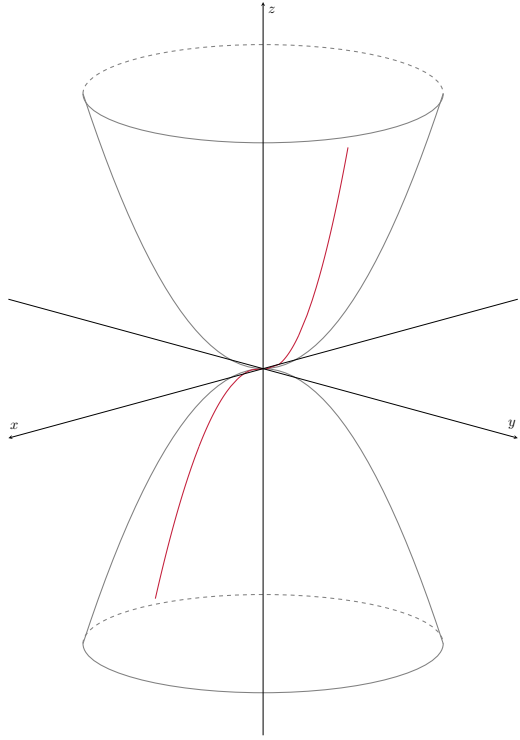}
\caption{An example of flow $X^\Theta$}
\end{figure}
The vector field $b\in L^p_{\mathrm{loc}}(\R^3)$ for all $p\in \left[1,\frac{4}{3} \right]$ and it can be directly checked that $\dive b=0$ in the sense of distributions on the whole $\R^{3}$, in particular $b$ is tangent to $\partial P$. Moreover the vector field $b$ satisfies the growth conditions of Theorem \ref{teo:am}. Observe that this vector field does not belong to any Sobolev space $W^{1,p}(\R^3)$ or to $BV(\R^3)$. However, similarly to the vector field in Example \ref{eq:dpl}, we have that $b\in W^{s,1}_{\mathrm{loc}}(\R^3)$ for $0<s<1/2$.
\\
We can easily define infinitely many different regular Lagrangian flows of $b$. Since we are considering flows defined almost everywhere, we need to define them only on $\R^3\setminus \{0\}$. We start for $\mathbf{x}\in\R^3\setminus P$: in this region the vector field is identically $0$ so that we define a flow $X$ simply as
$$
X(t,\mathbf{x})=\mathbf{x} \hspace{1cm}\forall t\geq 0.
$$
If $\mathbf{x}=(x,y,z)\in P^-$ we define
\begin{equation}\label{x}
\begin{dcases}
X_1(t,x,z)=\frac{x}{\sqrt{-z}}\sqrt[4]{z^2+4t} \\
X_2(t,y,z)=\frac{y}{\sqrt{-z}}\sqrt[4]{z^2+4t} \\
X_3(t,z)=-\sqrt{z^2+4t}
\end{dcases}
\hspace{0.7cm} \forall \ t\geq 0.
\end{equation}
Finally, when $\mathbf{x}=(x,y,z)\in P^+$ define the flow as
\begin{equation}\label{x}
\begin{dcases}
X_1(t,x,z)=\frac{x}{\sqrt{z}}\sqrt[4]{z^2-4t} \\
X_2(t,y,z)=\frac{y}{\sqrt{z}}\sqrt[4]{z^2-4t} \\
X_3(t,z)=\sqrt{z^2-4t}
\end{dcases}
\hspace{0.7cm} \mathrm{for} \ t \in \left[0,\frac{z^2}{4} \right].
\end{equation}
At time $t=\frac{z^2}{4}$ the trajectories reach the origin. A formal computation shows that the quantity
$$
\frac{X_1^2(t,\mathbf{x})+X_2^2(t,\mathbf{x})}{|X_3(t,\mathbf{x})|}=\frac{x^2+y^2}{|z|}
$$
is conserved by solutions of (3.2). This suggest to define the flow as
\begin{equation}\label{eq:thetasol}
\begin{dcases}
X_1(t,x,z)=\frac{x}{\sqrt{z}}\sqrt[4]{4t-z^2} \cos\Theta - \frac{y}{\sqrt{z}}\sqrt[4]{4t-z^2} \sin\Theta\\
X_2(t,y,z)=\frac{x}{\sqrt{z}}\sqrt[4]{4t-z^2} \sin\Theta+\frac{y}{\sqrt{z}}\sqrt[4]{4t-z^2} \cos\Theta \\
X_3(t,z)=-\sqrt{4t-z^2}
\end{dcases}
\hspace{0.7cm} \forall t \geq\frac{z^2}{4}.
\end{equation}
where $\Theta\in (0,2\pi]$ is arbitrary. An easy computation shows that $X$, defined as above, is a regular Lagrangian flow of $b$ for every $\Theta\in (0,2\pi]$. We call those kind of solutions $X^\Theta$, where $\Theta$ represents a rotation in the $xy$ plane. Heuristically, we can define this kind of flows as a consequence of the fact that the trajectories, once they reach the origin, can come out arbitrarily. The lack of uniqueness is a consequence of the fact that all the solutions can be extended in infinitely many ways once they reach the origin. This reproduces the same mechanism of Example $\ref{eq:dpl}$, although in this case the additional dimension allows for more flexibility and for an easy and explicit description of the nonunique flows. Actually there are other possible ways to define regular Lagrangian flows of $b$. This is not important for the purpose of this work and we refer to \cite{CCS} for a more in-depth discussion.

\subsection{The approximation of the limit vector field}
In this section we provide an approximation $b_\e$ of the vector field $b$ such that, for a fixed $\Theta\in(0,2\pi]$, the sequence $X^\e$ of flows of $b_\e$ converges to $X^\Theta$. Our strategy is to approximate the vector field $b$ close to the origin forcing the trajectories to rotate very fast along a given helix. In order to do this, we first smooth the union of the two paraboloids in the origin, see Figure 4. Then, we choose the rotation velocity in the cylinder $C_\e$ to be proportional to the height of $C_\e$. Precisely, the smaller the height of the cylinder, the faster the rotation of the characteristics. In order to get a smooth transition for the vector field between the truncated paraboloids $P^+_\e,P^-_\e$ and the cylinder, we then consider two transition zones $T^+_\e, T^-_\e$ (see again Figure 4). Finally, we define the region $\tilde{P}^\e$ as 
$$
\tilde{P}^\e=P^+_\e\cup T^+_\e\cup C_\e\cup T^-_\e\cup P^-_\e.
$$
The main properties of the sequence of approximating vector fields $\{b_{\e}\}_{\e}$ that we will construct are described in the following proposition.
\begin{prop}\label{prop:main}
Let $b$ be the vector field in $\eqref{eq:b}$. Given $\Theta\in (0,2\pi]$ there exists a sequence of vector fields $b_\e$ such that
\begin{enumerate}
\item $b_\e$ converges to $b$ in $L^1_{\mathrm{loc}}(\R^3)$;
\item $\dive b_\e=0$ in the sense of distributions, in particular $b_\e$ is tangent to $\partial \tilde{P^\e}$;
\item the flow $X^\e$ of $b_\e$ converges uniformly to $X^\Theta$ and the same convergence holds for the inverse $\left(X^\e\right)^{-1}$ towards $\left(X^\Theta\right)^{-1}$;
\item $b_\e\in \mathrm{Lip}(\tilde{P}^\e)$, $b_\e$ is identically $0$ on $\R^3\setminus\tilde{P}^\e$, and $b_\e\in BV_{\mathrm{loc}}(\R^3)$;
\item $\frac{b_\e}{1+|x|}=b_{1,\e}+b_{2,\e}$, with $b_{1,\e}\in L^1(\R^3)$ and $b_{2,\e}\in L^\infty(\R^3)$.
\end{enumerate}
\end{prop}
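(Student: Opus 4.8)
The plan is to construct the approximating vector fields $b_\e$ by modifying $b$ only in a small neighborhood of the origin, and to carry out the construction region by region following the decomposition $\tilde{P}^\e = P^+_\e \cup T^+_\e \cup C_\e \cup T^-_\e \cup P^-_\e$ described above. Outside $\tilde{P}^\e$ we set $b_\e \equiv 0$, and on the truncated paraboloids $P^\pm_\e$ we leave $b_\e = b$ (possibly with a cutoff near $\partial P$, although $b$ is already tangent there so this is not strictly necessary). The core of the construction is on the cylinder $C_\e$, where we must (i) deflect the incoming vertical flux coming down the paraboloid $P^-$ so it leaves through the top of $C_\e$ and re-emerges into $P^+$ along the correct characteristic, (ii) impose a rotation in the $xy$-plane whose total angular displacement across $C_\e$ equals $\Theta$ (mod $2\pi$), and (iii) do all of this with a divergence-free, Lipschitz-in-$\tilde P^\e$ vector field. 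For the rotation, I would add to the $xy$-components of $b_\e$ a term of the form $\omega_\e(z)\,(-y,x,0)$, where $\omega_\e$ is supported on the height of $C_\e$ (of size $\sim \e^\alpha$ for a suitable $\alpha$) and scaled so that $\int \omega_\e(z)\, \de z / (\text{vertical speed}) = \Theta$; since $(-y,x,0)$ is divergence-free and the remaining "transport down the cylinder" part can be taken as a $z$-independent vertical field matched to a radial field so as to be divergence-free (the same paraboloid scaling $r^2/|z| = \text{const}$ that conserves the quantity computed in the text), this is compatible with item (2). The transition zones $T^\pm_\e$ are there precisely to interpolate smoothly (Lipschitzly) between the scaling law on $P^\pm_\e$ and the cylindrical geometry on $C_\e$ while keeping $\dive b_\e = 0$; concretely one writes $b_\e$ in these zones using a stream-function / vector-potential representation and interpolates the potential.

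Once $b_\e$ is defined, items (1)–(5) are verified as follows. For (1): $b_\e$ and $b$ differ only on $\tilde P^\e \setminus (P^+_\e \cup P^-_\e)$ together with the symmetric-difference region near $\partial P$, all of which have Lebesgue measure tending to $0$; since $b \in L^p_{\mathrm{loc}}$ with $p > 1$ one gets $L^1_{\mathrm{loc}}$ convergence by Hölder, provided $\|b_\e\|_{L^p(C_\e)}$ stays controlled — this needs the rotation speed, though large, to be integrable to the right power against the small volume of $C_\e$, which is a bookkeeping constraint tying $\alpha$ to $p$ (this is where the exponent $p \le 4/3$ enters). For (2): divergence-free is built in by the potential/explicit construction, and tangency to $\partial\tilde P^\e$ is checked on each piece of the boundary. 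For (4): Lipschitz regularity on $\tilde P^\e$ is the reason we need the transition zones and a smooth (not merely continuous) matching of the rotation profile $\omega_\e$; $BV_{\mathrm{loc}}(\R^3)$ then follows because $b_\e$ is Lipschitz on $\tilde P^\e$, zero outside, and $\partial \tilde P^\e$ is a nice (piecewise smooth, finite-perimeter) hypersurface across which $b_\e$ has a bounded jump (in fact is continuous, being tangent and zero from outside). For (5): away from a neighborhood of the origin $b_\e = b$ satisfies the same $L^1 + L^\infty$ growth split as $b$ (stated in the text to hold by Theorem \ref{teo:am}), and on the bounded region $\tilde P^\e$ the field $b_\e/(1+|x|)$ is bounded, so it goes entirely into the $L^\infty$ piece; one just has to note the $L^\infty$ bound may depend on $\e$, which is allowed since the proposition is for fixed $\e$.

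For (3), the convergence of the flows, I would argue directly rather than through abstract stability: solve the ODE for $X^\e$ explicitly on each region. On $P^-_\e$ (for $|z|$ bounded away from $0$) and on $P^+_\e$ the flow $X^\e$ coincides with the explicit $X$ of \eqref{x}, up to the error introduced by the $\e$-truncation of the paraboloid tip, which is $O(\e)$ uniformly. Inside $\tilde P^\e$ the trajectory spends a time of order (height of $C_\e$)/(vertical speed) $\to 0$, during which its radial coordinate stays $O(\e)$ and its angular coordinate advances by exactly $\Theta$ by the normalization of $\omega_\e$; thus the trajectory enters $P^+$ at essentially the antipodal-rotated point dictated by the conserved quantity $r^2/|z|$ and the angle $\Theta$, which is exactly the initial condition producing $X^\Theta$ in \eqref{eq:thetasol}. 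Patching these estimates and using the explicit modulus of continuity of $X^\Theta$ gives uniform convergence $X^\e \to X^\Theta$; the same argument run backward in time (the construction is time-symmetric except for the sign of $z$, and the paraboloid scaling is invertible) gives $(X^\e)^{-1} \to (X^\Theta)^{-1}$. The main obstacle, I expect, is \textbf{not} any single estimate but the simultaneous reconciliation of the three demands on $C_\e \cup T^\pm_\e$: divergence-free, Lipschitz, and "rotation by $\Theta$ in vanishing time with controlled $L^p$ norm." Getting the scaling exponent $\alpha$ for the cylinder height right so that the rotation is fast enough to realize $\Theta$ yet the $L^p$ mass on $C_\e$ still vanishes — and doing so uniformly in the parameter $\Theta \in (0,2\pi]$ — is the delicate point, and is presumably where the restriction $p \in [1,\tfrac{4}{3}]$ is forced.
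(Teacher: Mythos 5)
Your proposal follows essentially the same route as the paper: an explicit region-by-region construction (truncated paraboloids where $b_\e=b$, a thin cylinder carrying a fast divergence-free rotation of the form $\omega\,(-y,x,0)$ normalized so that the total angular advance equals $\Theta$, and transition zones whose third component is determined by $\dive b_\e=0$ together with tangency to the boundary), followed by explicit integration of the characteristics in each region to obtain uniform convergence of $X^\e$ and of its inverse. The only small inaccuracy is your guess about where the exponent enters: the restriction $p\le\tfrac{4}{3}$ reflects the integrability of the \emph{limit} field $b$ near the origin (its horizontal components scale like $|z|^{-3/2}$ on the paraboloid), not the $L^p$ mass of the rotation on $C_\e$ --- for the proposition itself the paper only needs the trivial bound $\mathscr{L}^3(T^+_\e\cup C_\e\cup T^-_\e)\cdot\|b_\e\|_\infty\le C\e^2\cdot\e^{-3/2}\to 0$ to get $L^1_{\mathrm{loc}}$ convergence.
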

\begin{proof} We divide the proof in the following steps.\\
\\
$\underline{\mathit{Step \ 1}}$\\
\\
For any $\e>0$ we define:
\begin{equation}\label{eq:be}
b_\e(x,y,z)=
\begin{cases}
\left( -\frac{x}{|z|^2},-\frac{y}{|z|^2}, - \frac{2}{|z|} \right) & \mathrm{in} \ P^+_\e,
\\
\left( b_1(x,y,z), b_2(x,y,z), b_3(z) \right) & \mathrm{in} \ T^+_\e,
\\
\left(-\frac{y}{\beta^2\varepsilon^2},\frac{x}{\beta^2\varepsilon^2}, -\frac{27}{16\beta\varepsilon} \right) & \mathrm{in} \ C_\e,
\\
\left(\bar{b}_1(x,y,z), \bar{b}_2(x,y,z), \bar{b}_3(z) \right) & \mathrm{in} \ T^-_\e,
\\
\left( \frac{x}{|z|^2},\frac{y}{|z|^2}, - \frac{2}{|z|} \right) & \mathrm{in} \ P^-_\e,
\\
(0,0,0) & \mathrm{otherwise}.
\end{cases}
\end{equation}
In the above formula $\left( b_1(x,y,z), b_2(x,y,z), b_3(z) \right)$ and $\left(\bar{b}_1(x,y,z), \bar{b}_2(x,y,z), \bar{b}_3(z) \right)$ will be defined in the following, while $\alpha,\beta,\gamma,\eta \in \R_+$ and
\begin{equation*}
\begin{aligned}
&P^+_\e:=\{ (x,y,z)\in\R^3: x^2+y^2\leq z, \ z\geq\alpha\e \},\\
&T^+_\e:=\left\{ (x,y,z)\in\R^3: \beta\e+\beta\e\sqrt{\frac{27(x^2+y^2)-32\beta\e}{27(x^2+y^2)}}\leq z, \ z\in [\beta\e,\alpha\e]  \right\},\\
&C_\e:=\left\{ (x,y,z)\in\R^3: x^2+y^2\leq \frac{32}{27}\beta\e, z\in [-\gamma\e, \beta\e] \right\},\\
&T^-_\e:=\left\{ (x,y,z)\in\R^3: -\gamma\e-\gamma\e\sqrt{\frac{27(x^2+y^2)-32\gamma\varepsilon}{27(x^2+y^2)}}\leq -z, \ z\in [-\eta\e,-\gamma\e]  \right\},\\
&P^-_\e:=\{ (x,y,z)\in\R^3: x^2+y^2\leq -z, \ z\leq -\eta\e \}.
\end{aligned}
\end{equation*}
In the regions $T_\e^+$ and $T_\e^-$, here referred to as {\em transition zones}, we combine the effects of rotation and dilation for the first two components. This means that in $T^+_\e$ we define $b_1$ and $b_2$ by interpolating linearly in the $z$ variable the value of $b_\e(x,y,\alpha\e)$ and $b_\e(x,y,\beta\e)$. The third component and the geometry of the regions are defined in order to have that $b_\e$ is a divergence-free vector field: this means that we obtain $b_3$ and the definition of $T^+_\e$ by solving the equations
$$
\partial_x b_1+\partial_y b_2+\partial_z b_3=0,
$$
$$
{b_\e}|_{\partial T^+_\e}\cdot n=0,
$$
where $n$ denotes the unit exterior normal to $\partial T^+_\e$. With the same idea we define $T^-_\e,\bar{b}_1,\bar{b}_2,\bar{b}_3$. So for $(x,y,z)\in T_\e^+$, the vector field $b_{\e}$ is defined as:
$$
b_1(x,y,z)=\frac{z-\beta \varepsilon}{\varepsilon} \frac{x}{\alpha^2\varepsilon^2(\beta-\alpha)}-\frac{z-\alpha\varepsilon}{\varepsilon}\frac{y}{\beta^2\varepsilon^2(\beta-\alpha)},
$$

$$
b_2(x,y,z)=\frac{z-\alpha \varepsilon}{\varepsilon} \frac{x}{\beta^2\varepsilon^2(\beta-\alpha)}+\frac{z-\beta\varepsilon}{\varepsilon}\frac{y}{\alpha^2\varepsilon^2(\beta-\alpha)},
$$
$$
b_3(z)=\frac{2}{\alpha^2\varepsilon^3(\beta-\alpha)} \left(\beta\varepsilon z-\frac{z^2}{2}\right).
$$
\begin{figure}[h!]\label{fig:clessidra}
\centering
\includegraphics[width=0.75\textwidth]{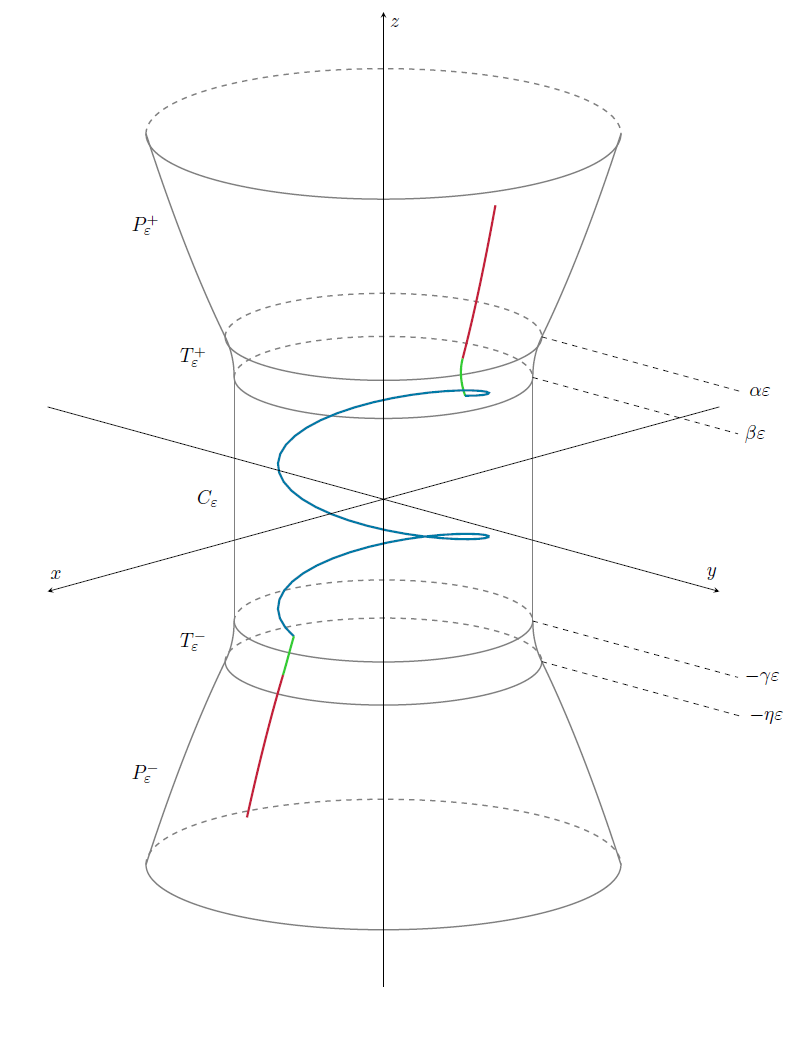}
\caption{The flow $X^\e$ is represented by different colors according to the region in which it is located. In the limit, it converges to the one of Figure 3.}
\end{figure}\\
Instead, for $(x,y,z)\in T_\e^-$, the vector field $b_{\e}$ is defined as:
\[
\bar{b}_1(x,y,z)=\frac{z+\gamma \varepsilon}{\varepsilon} \frac{x}{\eta^2\varepsilon^2(\gamma-\eta)}+\frac{z+\eta\varepsilon}{\varepsilon}\frac{y}{\beta^2\varepsilon^2(\gamma-\eta)},
\]

\[
\bar{b}_2(x,y,z)=-\frac{z+\eta\varepsilon}{\varepsilon}\frac{x}{\beta^2\varepsilon^2(\gamma-\eta)}+\frac{z+\gamma \varepsilon}{\varepsilon} \frac{y}{\eta^2\varepsilon^2(\gamma-\eta)},
\]
\[
\bar{b}_3(z)=-\frac{2}{\eta^2\varepsilon^3(\gamma-\eta)}\left(\frac{z^2}{2}+\gamma\varepsilon z \right) .
\]
Moreover, in order to connect the various regions, the parameters are chosen so that:
\[
4\beta=3\alpha, \ 4\gamma=3\eta, \ \beta=\gamma.
\]
We remark that $\beta$ is the only free parameter, representing the half height of the cylinder, and it will be chosen later in the proof.
The vector fields $b_\varepsilon$ and $b$ differ only in the set $A_\e:=T_\e^+ \cup C_\e \cup T_\e^-$. Since $\mathscr{L}^3(A_\e)=C\e^2$, $\|b_\e\|_\infty\leq C\e^{-3/2}$ and $b\in L^1_{\mathrm{loc}}(\R^3)$, by the trivial estimate
\[
\int_{A_\varepsilon} |b_\varepsilon -b| \ \de x  \leq \int_{A_\varepsilon} |b_\varepsilon|\de x +\int_{A_\varepsilon}|b| \de x
\]
we get the $L^1_{\mathrm{loc}}$ convergence of $b_\e$ to $b$.\\
\\
$\underline{\mathit{Step \ 2}}$\\
\\
We now compute the characteristics of the vector field $b_{\e}$ for $\mathbf{x}\in P^+_\e$, as it is the region of interest for the nonuniqueness. Similar computations  allow to compute the characteristics in the whole $\R^3$ and so we omit them. Consider the following system of ordinary differential equations
\begin{equation}\label{abc}
\begin{cases}
\dot{X^\varepsilon}(t,\mathbf{x})=b_\varepsilon(X^\varepsilon(t,\mathbf{x})) \\
X^\varepsilon(0,\mathbf{x})=\mathbf{x}
\end{cases} \ \mathbf{x}\in P^+_\e.
\end{equation}
Since $b_\e$ is smooth on $P^+_\e$, \eqref{abc} has a unique solution given by:
\[
\begin{cases}
X_1^\varepsilon(t,x,z)=\frac{x}{\sqrt{z}}\sqrt[4]{z^2-4t} \\
X_2^\varepsilon(t,y,z)=\frac{y}{\sqrt{z}}\sqrt[4]{z^2-4t} \\
X_3^\varepsilon(t,z)=\sqrt{z^2-4t}
\end{cases}
\ t \in \left[0,t_1^\varepsilon:=\frac{z^2-\alpha^2\varepsilon^2}{4} \right].
\]
At $t=t_1^\varepsilon$, we have $X_3^\varepsilon(t_1^\varepsilon,z)=\alpha\varepsilon$ and the equations change. Specifically we have for the third component
\[
\begin{cases}\label{x3}
\dot{X_3^\e}(t,z)=-\frac{27}{8\beta^3\e^3}\left( \beta\e X_3^\e-\frac{(X_3^\e)^2}{2} \right), \\
X_3^\e(t_1^\e,z)=\alpha\e.
\end{cases}
\]
The solution is
\begin{equation}\label{eqz}
\z=\frac{4\beta\e}{2+\exp\left(\frac{27}{8\beta^2\e^2}(t-t_1^\e)\right)},
\end{equation}
up to the time $t_2^\e:=t_1^\e+\frac{8\beta^2\e^2}{27}\ln(2)$, when $\z(t_2^\e,z)=\beta\e$.
Substituting \eqref{eqz} in the first two equations, we can rewrite them  in the form
\begin{equation}\label{eqtr}
\begin{cases}
\dot{\x}=A(t)\x-B(t)\y, \\ 
\dot{\y}=B(t)\x+A(t)\y, \\
\x(t_1^\e,x,z)=\frac{x}{\sqrt{z}}\sqrt{\alpha\e}, \\
\y(t_1^\e,y,z)=\frac{y}{\sqrt{z}}\sqrt{\alpha\e},
\end{cases}
\end{equation}
where the coefficients $A(t), B(t)$ are defined as
$$
A(t)=-\frac{27}{16}\frac{\z-\beta\e}{\beta^3\e^3},
$$
$$
B(t)=-\frac{3\z-4\beta\e}{\beta^3\e^3}.
$$
Multiplying the first equation by $\x$ and the second one by $\y$,  adding the two equations and setting $\varphi_\e=(\x)^2+(\y)^2$, we obtain that
\[
\begin{cases}
\dot{\varphi_\e}=2 A(t)\varphi_\e, \\
\varphi_\e(t_1^\e)=\frac{x^2+y^2}{z}\alpha\e,
\end{cases}
\]
yielding
\begin{equation}
\varphi_\e(t)=\frac{x^2+y^2}{z}\alpha\e e^{-\frac{27}{8\beta^2\e^2}(t-t_1^\e)} \left( \frac{2+\exp\left( \frac{27}{8\beta^2\e^2}(t-t_1^\e) \right)}{3} \right)^2.
\end{equation}
Because $\x=\sqrt{\varphi_\e} \cos\theta$ and $\y=\sqrt{\varphi_\e}\sin\theta$, substituting these expressions in the equations \eqref{eqtr} we get
\[
\dot{\theta}(t)=B(t),
\]
and then
\begin{equation}
\theta(t)=\theta_0+\frac{2}{\beta^2\e^2}(t-t_\e^1)-\frac{16}{9}\ln\left(\frac{2+\exp\left( \frac{27}{8\beta^2\e^2}(t-t_1^\e) \right)}{3} \right),
\end{equation}
where 
$$
\theta_0=\begin{cases}
\pi \hspace{4cm} &\mbox{if } y=0,x<0, \\
2\arctan\left(\frac{y}{x+\sqrt{x^2+y^2}}\right) & \mbox{otherwise}.
\end{cases}
$$
During the passage in the first transition zone, the trajectory rotates by an angle 
\[
\bar{\theta}=\theta(t_2^\e)-\theta_0=\frac{16}{27}\ln(2)+\frac{16}{9}\ln\left(\frac{3}{4}\right).
\]
At time $t_2^\e$ the flow enters the cylinder and the system becomes
\[
\begin{cases}
\dot{\x}=-\frac{\y}{\e^2},\\
\dot{\y}=\frac{\x}{\e^2},\\
\dot{\z}=-\frac{27}{16\beta\e}.
\end{cases}
\]
Then, the solution can be extended as
\begin{equation}
\begin{cases}
\x(t)=\x(t_2^\e)\cos\left( \frac{t-t_2^\e}{\e^2} \right)-\y(t_2^\e)\sin\left( \frac{t-t_2^\e}{\e^2} \right),\\
\y(t)=\x(t_2^\e)\sin\left( \frac{t-t_2^\e}{\e^2} \right)+\y(t_2^\e)\cos\left( \frac{t-t_2^\e}{\e^2} \right),\\
\z(t)=\beta\e-\frac{27}{16\beta\e} (t-t_2^\e),
\end{cases}
\end{equation}
up to the time $t_3^\e:=t_2^\e+\frac{32}{27}\beta^2\e^2$ when $\z(t_3^\e)=-\beta\e$.
Then during the time $t_3^\e-t_2^\e$ the trajectory rotates with respect to $\x(t_2^\e),\y(t_2^\e)$ of an angle
\[
\frac{t_3^\e-t_2^\e}{\e^2}=\frac{32}{27}\beta^2.
\]
Following the same steps as before, the solution of the system in the second transition zone is
\begin{equation}
\begin{cases}
\x=\sqrt{\rho}\cos\phi,\\
\y=\sqrt{\rho}\sin\phi,\\
\z=-\frac{2\beta\e}{1+\exp\left( -\frac{27}{8\beta^2\e^2} (t-t_3^\e,)\right)},
\end{cases}
\end{equation}
where
\[
\rho(t)=\rho(t_3^\e)e^{\frac{27}{8\beta^2\e^2}(t-t_3^\e)}\left(\frac{1+\exp\left(-\frac{27}{8\beta^2\e^2}(t-t_3^\e)\right)}{2}\right)^2,
\]
\[
\phi(t)=\phi_0-\frac{16}{9}\ln\left( \frac{1+\exp\left(-\frac{27}{8\beta^2\e^2}(t-t_3^\e)\right)}{2} \right)-\frac{2}{\beta^2\e^2}(t-t_3^\e),
\]
up to the time $t_4^\e:=t_3^\e+\frac{8\beta^2\e^2}{27}\ln(2)$, where $\phi_0=\theta_0+\bar{\theta}+\frac{32}{27}\beta^2$.
Then, note that $\phi(t_4^\e)=-\theta(t_2^\e)=-\bar{\theta}$ and the new initial datum for the ODE system is
\vspace{0.3cm} 
$$
\left(\sqrt{\frac{x^2+y^2}{z}}\sqrt{\frac{4}{3}\beta\e}\cos\left( \theta_0+\frac{32}{27}\beta^2\right)\; , \;\sqrt{\frac{x^2+y^2}{z}}\sqrt{\frac{4}{3}\beta\e}\sin\left( \theta_0+\frac{32}{27}\beta^2\right)\; , \; -\frac{4}{3}\beta\e\right).
$$
\vspace{0.3cm}
\\
For time larger than $t_4^\e$, the flow continues as 
\begin{equation}
\begin{cases}
\x=\sqrt{\frac{x^2+y^2}{z}}\sqrt[4]{4(t-t_4^\e)+\frac{16}{9}\beta^2\e^2}\cos\left( \theta_0+\frac{32}{27}\beta^2\right),\\
\y=\sqrt{\frac{x^2+y^2}{z}}\sqrt[4]{4(t-t_4^\e)+\frac{16}{9}\beta^2\e^2}\sin\left( \theta_0+\frac{32}{27}\beta^2\right),\\
\z=-\sqrt{4(t-t_4^\e)+\frac{16}{9}\beta^2\e^2}.
\end{cases}
\end{equation}
In conclusion, to find the solution $X^\Theta$ in the limit, we have to choose the parameter $\beta$ as
\[
\beta=\sqrt{\frac{27}{32}\Theta}.
\]
\\
$\underline{\mathit{Step \ 3}}$\\
\\
In this step we prove the convergence of the flows. \\ First we know that 
$$
X^\e(t,\mathbf{x})=X^\Theta(t,\mathbf{x})\hspace{0.7cm}
\forall \mathbf{x}\in P^-_\e\cup\left(\R^3\setminus\tilde{P}^\e\right), \ \forall t \in[0,T].
$$
We prove only the convergence for $x\in P^+_\e$, since the same argument works in $T^+_\e\cup C_\e\cup T_\e^-$. First of all, we have that
$$
X^\e(t,\mathbf{x})=X^\Theta(t,\mathbf{x}) \hspace{1cm} \forall t\in[0,t_1^\e], \  \forall \mathbf{x}\in P^+_\e.
$$
Then the trajectories $X^\e$ and $X^\Theta$ enter the approximated region and exit from it after a different amount of time, namely
$$
\Delta t_\e=(2+\ln 2)\frac{16}{27}\beta^2\e^2, \hspace{1cm} \Delta t_\Theta=\frac{8}{9}\beta^2\e^2.
$$
Since for $t\in[t_1^\e, t_1^\e+\Delta t_\Theta]$ both $X^\e(t,\mathbf{x})$ and $X^\Theta(t,\mathbf{x})$ are in $T^+_\e\cup C_\e\cup T^-_\e$, we have
$$
|X^\e(t,\mathbf{x})-X^\Theta(t,\mathbf{x})|\leq C\sqrt{\e}, \hspace{1cm} \forall t\in[t_1^\e, t_1^\e+\Delta t_\Theta], \ \forall \mathbf{x}\in P^+_\e.
$$
\\
For $t\in[t_1^\e+\Delta t_\Theta,t_1^\e+\Delta t_\e]$ the flow $X^\e$ is still in $C_\e$ while $X^\Theta$ lies in $P^-_\e$. Since 
$$
X_3^\Theta(t_4^\e,z)=-\frac{4}{3}\beta\e\sqrt{\frac{5+4\ln{2}}{3}},
$$
we have that 
$$
|X^\e(t,\mathbf{x})-X^\Theta(t,\mathbf{x})|\leq C\sqrt{\e}, \hspace{1cm} \forall t\in [t_1^\e,t_4^\e], \ \forall \mathbf{x}\in P^+_\e.
$$
\\
Then, for $t\geq t_4^\e$, the flow $X^\e$ exits the approximated region at the same point as the flow $X^\Theta$ and it can be written as
$$X^\e(t,\mathbf{x})=X^\Theta(t-\Delta_\e,\mathbf{x}),$$
where $\Delta_\e={O(\e^2)}$ is such that $t^\e_4=\frac{z^2}{4}+\Delta_\e$. So for $t\geq t_4^\e$, we estimate the difference $X^\e-X^\Theta$ component by component:
\begin{itemize}
\item for the third component we have
\begin{align*}
|X_3^\e(t,\mathbf{x})-X^\Theta_3(t,\mathbf{x})|& =|X_3^\Theta(t-\Delta_\e,z)-X^\Theta_3(t,z)| \\ &=|\sqrt{4t-z^2}-\sqrt{4(t-\Delta_\e)-z^2}| \\ &=
\frac{4\Delta_\e}{|\sqrt{4t-z^2}+\sqrt{4(t-\Delta_\e)-z^2}|} \\
&\leq \frac{4\Delta_\e}{\sqrt{4t-z^2}}\leq 2\sqrt{\Delta_\e}\leq C\e,
\end{align*}
\item for $i\in \{1,2\}$ we have
\begin{align*}
|X_i^\e(t,\mathbf{x})-X^\Theta_i(t,\mathbf{x})|& =|X_i^\Theta(t-\Delta_\e,\mathbf{x})-X^\Theta_i(t,\mathbf{x})| \\ &\leq \sqrt{\frac{x^2+y^2}{z}}\left|\sqrt[4]{4t-z^2}-\sqrt[4]{4(t-\Delta_\e)-z^2}\right| \\ &\leq \displaystyle\left|
\frac{\sqrt{4t-z^2}-\sqrt{4(t-\Delta_\e)-z^2}}{\sqrt[4]{4t-z^2}+\sqrt[4]{4(t-\Delta_\e)-z^2}}\right|\\ &\leq \frac{4\Delta_\e}{\sqrt[4]{4t-z^2}\sqrt{4t-z^2}}\leq C\sqrt[4]{\Delta_\e}\leq C\sqrt{\e}.
\end{align*}
\end{itemize} 
Note that in the previous estimate we have used the condition $x^2+y^2\leq z$. In conclusion, we have
$$
\sup_{t\in[0,T]}\sup_{\mathbf{x}\in\R^3}|X^\e(t,\mathbf{x})-X^\Theta(t,\mathbf{x})|<C\sqrt{\e},
$$
which gives the desired convergence.\\
\\
$\underline{\mathit{Step \ 4}}$\\
\\
{ In this step we prove the convergence of the inverse of the flows.\\
First of all, note that $X_{\e}^{\Theta}(t,\cdot)^{-1}=X_\e^\Theta(-t,\cdot)$, while $X^{\Theta}(t,\cdot)^{-1}$ is given by the following: if $(x,y,z)\in P^+,t\geq 0$ then
$$
X^{\Theta}(t,\cdot)^{-1}(x,y,z)=
\left(\frac{x}{\sqrt{z}}\sqrt[4]{z^2+4t},\frac{y}{\sqrt{z}}\sqrt[4]{z^2+4t},\sqrt{z^2+4t}\right).
$$
If $(x,y,z)\in P^-$ and $t\in \left[0,\frac{z^2}{4}\right]$,
$$
X^{\Theta}(t,\cdot)^{-1}(x,y,z)=\left( \frac{x}{\sqrt{-z}}\sqrt[4]{z^2-4t},\frac{y}{\sqrt{-z}}\sqrt[4]{z^2-4t},-\sqrt{z^2-4t}\right),
$$
while for $(x,y,z)\in P^-$ and $t\geq\frac{z^2}{4}$, $X^{\Theta}(t,\cdot)^{-1}(x,y,z)$ is given by
$$
\left( (x\cos\Theta+y\sin\Theta)\frac{\sqrt[4]{4t-z^2}}{\sqrt{-z}},(-x\sin\Theta+y\cos\Theta)\frac{\sqrt[4]{4t-z^2}}{\sqrt{-z}}, \sqrt{4t-z^2}\right).
$$
Finally $X^{\Theta}(t,\cdot)^{-1}(x,y,z)=(x,y,z)$ otherwise. Arguing as in the previous step, it is enough to prove the converngence in the region $P^-_\e$. We have that 
$$
X_{\e}^{\Theta}(t,\cdot)^{-1}(\mathbf{x})=X^{\Theta}(t,\cdot)^{-1}(\mathbf{x}) \hspace{1cm} \forall t\in[0,t_1^\e], \  \forall \mathbf{x}\in P^-_\e,
$$
where $t_1^\e=\sqrt{\frac{z^2}{4}-\frac{4}{9}\beta^2}$. Then the trajectories $\left(X^\e\right)^{-1}$ and $\left(X^\Theta\right)^{-1}$ enter the approximated region and exit from it after a different amount of time, namely
$$
\Delta t_\e=(2+\ln 2)\frac{16}{27}\beta^2\e^2, \hspace{1cm} \Delta t_\Theta=\frac{8}{9}\beta^2\e^2.
$$
Since for $t\in[t_1^\e, t_1^\e+\Delta t_\Theta]$ both $X^\e(t,\cdot)^{-1}(\mathbf{x})$ and $X^\Theta(t,\cdot)^{-1}(\mathbf{x})$ are in $T^+_\e\cup C_\e\cup T^-_\e$, we have
$$
|X^\e(t,\cdot)^{-1}(\mathbf{x})-X^\Theta(t,\cdot)^{-1}(\mathbf{x})|\leq C\sqrt{\e}, \hspace{1cm} \forall t\in[t_1^\e, t_1^\e+\Delta t_\Theta], \ \forall \mathbf{x}\in P^-_\e.
$$
\\
For $t\in[t_1^\e+\Delta t_\Theta,t_1^\e+\Delta t_\e]$ the flow $\left(X^\e\right)^{-1}$ is still in $C_\e$ while $\left(X^\Theta\right)^{-1}$ lies in $P^+_\e$. Note that the flow $\left(X^\e\right)^{-1}$ exit from the approximated region at time $t_4^\e=\frac{z^2}{4}+\frac{16}{27}\beta^2\e^2\log(2)+\frac{20}{27}\beta^2\e^2$ and $\left(X^\Theta\right)^{-1}$ is in
$$
X_3^\Theta(t_4^\e,\cdot)^{-1}(z)=\frac{4}{3}\beta\e\sqrt{\frac{5+4\ln{2}}{3}}.
$$
So we have that 
$$
|X^\e(t,\cdot)^{-1}(\mathbf{x})-X^\Theta(t,\cdot)^{-1}(\mathbf{x})|\leq C\sqrt{\e}, \hspace{1cm} \forall t\in [t_1^\e,t_4^\e], \ \forall \mathbf{x}\in P^-_\e.
$$
Then, for $t\geq t_4^\e$, the flow $\left(X^\e\right)^{-1}$ exits the approximated region at the same point as the flow $\left(X^\Theta\right)^{-1}$ and it can be written as
$$X^\e(t,\cdot)^{-1}(\mathbf{x})=X^\Theta(t-\Delta_\e,\cdot)^{-1}(\mathbf{x}),$$
where $\Delta_\e=O(\e^2)$ is such that $t^\e_4=\frac{z^2}{4}+\Delta_\e$. So for $t\geq t_4^\e$, we estimate the difference $\left(X^\e\right)^{-1}-\left(X^\Theta\right)^{-1}$ component by component:
\begin{itemize}
\item for the third component we have
\begin{align*}
|X_3^\e(t,\cdot)^{-1}(\mathbf{x})-X^\Theta_3(t,\cdot)^{-1}(\mathbf{x})|& =|X_3^\Theta(t-\Delta_\e,\cdot)^{-1}(z)-X^\Theta_3(t,\cdot)^{-1}(z)| \\ &=\left|\sqrt{4(t-\Delta_\e)-z^2}-\sqrt{4t-z^2}\right| \\ &=
\frac{4\Delta_\e}{|\sqrt{4t-z^2}+\sqrt{4(t-\Delta_\e)-z^2}|} \\
&\leq \frac{4\Delta_\e}{\sqrt{4t-z^2}}\leq 2\sqrt{\Delta_\e}\leq C\e,
\end{align*}
\item for $i\in \{1,2\}$ we have
\begin{align*}
|X_i^\e(t,\cdot)^{-1}(\mathbf{x})-X^\Theta_i(t,\cdot)^{-1}(\mathbf{x})|& =|X_i^\Theta(t-\Delta_\e,\cdot)^{-1}(\mathbf{x})-X^\Theta_i(t,\cdot)^{-1}(\mathbf{x})| \\ &\leq \sqrt{\frac{x^2+y^2}{-z}}\left|\sqrt[4]{4(t-\Delta_\e)-z^2}-\sqrt[4]{4t-z^2}\right| \\ &\leq \displaystyle\left|\frac{
\sqrt{4(t-\Delta_\e)-z^2}-\sqrt{4t-z^2}}{\sqrt[4]{4t-z^2}+\sqrt[4]{4(t-\Delta_\e)-z^2}}\right| \\ &\leq \frac{4\Delta_\e}{\sqrt[4]{4t-z^2}\sqrt{4t-z^2}}\leq C\sqrt[4]{\Delta_\e}\leq C\sqrt{\e}.
\end{align*}
\end{itemize} 
Note that in the previous estimate we have used the condition $x^2+y^2\leq -z$. In conclusion, we have
$$
\sup_{t\in[0,T]}\sup_{\mathbf{x}\in\R^3}|X^\e(t,\cdot)^{-1}(\mathbf{x})-X^\Theta(t,\cdot)^{-1}(\mathbf{x})|<C\sqrt{\e},
$$
which gives the desired convergence.
}
\\
\\
$\underline{\mathit{Step \ 5}}$\\
\\
In this step we check the regularity of $b_\e$. It is easy to verify that $b_\e$ is locally bounded and $$\|\nabla b_\e\|_\infty\leq C \e^{-5/2} $$ inside $\tilde{P}_\e$ up to the boundary, so $b_\e$ is Lipschitz inside $\tilde{P}_\e$ for fixed $\e$. Furthermore $b_\e=0$ in $\R^3\setminus \tilde{P}^\e$ and the jump across the surface $\partial\tilde{P}^\e$ is controlled by $C \e^{-5/2}$ implying $b_\e\in BV_{\mathrm{loc}}(\R^3)$.
We can easily prove that $\dive b_\e=0$ inside $\tilde{P}_\e$ and that $b_\e$ is tangent to $\partial\tilde{P}_\e$, hence it is divergence-free in the sense of distributions in the whole space.
The growth condition follows easily from the fact that the limit vector field $b$ verifies it.
\end{proof}

\section{Proof of the main theorems}
\vspace{0.2cm}
In this section we give the proofs of the main theorems stated in the introduction, which we restate here for the reader's convenience.
\begin{thm2}
There exists a divergence free vector field $b\in L_{\mathrm{loc}}^{p}(\R^{3})$ with $p\in [1,\frac{4}{3}]$, and a sequence of divergence-free vector fields $b_n\in C^\infty(\R^{3})$  such that $b_n\to b$ strongly in $L_{\mathrm{loc}}^{p}(\R^{3})$ and the uniquely defined sequence $X^n$ of flows of $b_n$ does not converge, but has at least two different subsequences along converging in $L^\infty((0,T);L^1_{\mathrm{loc}}(\R^3))$ to two different flows.
\end{thm2}
\begin{proof}
Let $b$ be the vector field defined in $(\ref{eq:b})$ and let $\Theta,\Phi \in (0,2\pi]$ with $\Theta\neq\Phi$. From Proposition \ref{prop:main} there exist $b_\e^\Theta,b_\e^\Phi\in BV_{\mathrm{loc}}(\R^3)$ and $X^\Theta,X^\Phi\in C([0,T];L^1_{\mathrm{loc}}(\R^3))$ with the following properties. First, it holds that
\begin{equation}\label{1}
\begin{aligned}
& b_\e^\Theta \longrightarrow b \ \mathrm{in} \ L^1_{\mathrm{loc}}(\R^3), \ \mathrm{as} \ \e\to 0, \\
& b_\e^\Phi \longrightarrow b \ \mathrm{in} \ L^1_{\mathrm{loc}}(\R^3), \ \mathrm{as} \ \e\to 0.
\end{aligned}
\end{equation}
Moreover, by denoting with $X^\Theta_\e,X^\Phi_\e$ the unique regular Lagrangian flows of $b_\e^\Theta,b_\e^\Phi$, it holds that
\begin{equation}\label{2}
\begin{aligned}
& X_\e^\Theta \longrightarrow X^\Theta \ \mathrm{in} \ L^1((0,T);L^1_{\mathrm{loc}}(\R^3), \ \mathrm{as} \ \e\to 0, \\
& X_\e^\Phi \longrightarrow X^\Phi \ \mathrm{in} \ L^1((0,T);L^1_{\mathrm{loc}}(\R^3), \ \mathrm{as} \ \e\to 0.
\end{aligned}
\end{equation}
Let $b_{\e,l}^\Theta,b_{\e,k}^\Phi\in C^\infty(\R^3)$ be mollifications of $b_\e^\Theta,b_\e^\Phi$. Since $b_\e^\Theta,b_\e^\Phi$ are in $BV_{\mathrm{loc}}(\R^3)$ for fixed $\e$, by using Theorem $\ref{teo:stab}$ it follows that, for $\e>0$ fixed
\begin{equation}\label{3}
\begin{aligned}
& X_{\e,l}^\Theta \longrightarrow X^\Theta_\e \ \mathrm{in} \ L^1((0,T);L^1_{\mathrm{loc}}(\R^3), \ \mathrm{as} \ l\to \infty, \\
& X_{\e,k}^\Phi \longrightarrow X^\Phi_\e \ \mathrm{in} \ L^1((0,T);L^1_{\mathrm{loc}}(\R^3), \ \mathrm{as} \ k\to \infty.
\end{aligned}
\end{equation}
where $X_{\e,l}^\Theta,X_{\e,k}^\Phi$ denote the smooth flows of $b_{\e,l}^\Theta,b_{\e,k}^\Phi$ respectively. By using $\eqref{2},\eqref{3}$ and a simple diagonal argument there exist $\e_i,l_i,\e_j,k_j$ with $i,j\in\N$ such that
\begin{equation*}
\begin{aligned}
& X_{\e_i,l_i}^\Theta \longrightarrow X^\Theta \ \mathrm{in} \ L^1((0,T);L^1_{\mathrm{loc}}(\R^3), \ \mathrm{as} \ i\to \infty, \\
& X_{\e_j,k_j}^\Phi \longrightarrow X^\Phi \ \mathrm{in} \ L^1((0,T);L^1_{\mathrm{loc}}(\R^3), \ \mathrm{as} \ j\to \infty.
\end{aligned}
\end{equation*}
Finally, since both $b_{\e,l}^\Theta,b_{\e,k}^\Phi$ strongly converge in $L^1_{\mathrm{loc}}(\R^3)$ to $b$, by merging $b_{\e_i,l_i}^\Theta,b_{\e_j,k_j}^\Phi$ and appropriately renaming the indexes we can infer that there exists $\{b_n\}_n$ as claimed in the statement of the theorem.
\end{proof}
We now move to the proof of Theorem \ref{teo:main1}. 
\begin{thm1}
There exist an autonomous divergence-free vector field $b\in L_{\mathrm{loc}}^{p}(\R^{3})$ with $p\in [1,\frac{4}{3}]$, and a sequence of divergence-free vector fields $b_n\in C^\infty(\R^{3})$ converging to $b$ strongly in $L^p_{\mathrm{loc}}(\R^3)$ such that the following holds. Let $u_0\in \mathcal{I}\cap L^\infty(\R^3)$ be a given inital datum, then there exist subsequences $n_i$ and $n_j$ such that the sequences $u_{n_i}$ and $u_{n_j}$, solutions of \eqref{eq:tee} with initial datum $u_0$, converge in $L^\infty((0,T);L^\infty(\R^3)-w*)\cap L^\infty((0,T);L^1_{\mathrm{loc}}(\R^3))$ to two different limits, which are bounded distributional solutions of \eqref{eq:te}.
\end{thm1}
\begin{proof}
Let $b$ be the vector field defined in $(\ref{eq:b})$ and let $\Theta,\Phi \in (0,2\pi]$ with $\Theta\neq\Phi$. From Proposition \ref{prop:main} there exist $b_\e^\Theta,b_\e^\Phi\in BV_{\mathrm{loc}}(\R^3)$ such that
\begin{equation}
\begin{aligned}
& b_\e^\Theta \longrightarrow b \ \mathrm{in} \ L^1_{\mathrm{loc}}(\R^3), \ \mathrm{as} \ \e\to 0, \\
& b_\e^\Phi \longrightarrow b \ \mathrm{in} \ L^1_{\mathrm{loc}}(\R^3), \ \mathrm{as} \ \e\to 0.
\end{aligned}
\end{equation}
Let $u_0\in \mathcal{I}\cap L^\infty(\R^3)$ and consider the Cauchy problems
\begin{equation}\label{eq:th}
\begin{cases}
\partial_t u^\e_\Theta+b_\e^\Theta\cdot\nabla u^\e_\Theta=0,\\
u^\e_\Theta|_{t=0}=u_0,
\end{cases}
\end{equation}
\begin{equation}\label{eq:ph}
\begin{cases}
\partial_t u^\e_\Phi+b_\e^\Phi\cdot\nabla u^\e_\Phi=0,\\
u^\e_\Phi|_{t=0}=u_0.
\end{cases}
\end{equation}
Since $b_\e^\Theta, b_\e^\Phi$ verify the hypotesis of Theorem \ref{teo:am} for every fixed $\e$, the solutions of \ref{eq:th} and \ref{eq:ph} are unique and they are given respectively by the formulas
\begin{equation}
\begin{aligned}
& u^\e_\Theta(t,x)=u_0((X^\Theta_\e(t,\cdot)^{-1}(x)), \\
& u^\e_\Phi(t,x)=u_0((X^\Phi_\e(t,\cdot)^{-1}(x)),
\end{aligned}
\end{equation}
where $X^\Theta_\e,X^\Phi_\e$ are the unique Regular Lagrangian Flows of $b_\e^\Theta,b_\e^\Phi$.
Then $u^\e_\Theta$ converge uniformly on compact sets to $u_\Theta:=u_0((X^\Theta)^{-1})$, since
\begin{align*}
\sup_{t\in[0,T]}\sup_{B_R} |& u_\Theta^\e(t,\cdot)-  u_\Theta(t,\cdot)| \\
& =\sup_{t\in[0,T]}\sup_{B_R} |u_{0}((X_{\e}^{\Theta}(t,\cdot)^{-1})-u_0((X^\Theta(t,\cdot)^{-1})| \\ 
&\leq \|\nabla u_0\|_{L^\infty(B_R)} \sup_{t\in[0,T]}\sup_{\R^3}|X_{\e}^{\Theta}(t,\cdot)^{-1}-X^\Theta(t,\cdot)^{-1}| \\
& \leq C \sqrt{\e}.
\end{align*}
Here $B_R$ is a closed ball of radius $R>0$, $u_0\in C^\infty(\R^3)$ so it is Lipschitz on compact sets and the backward flow $X_{\e}^{\Theta}(t,\cdot)^{-1}$ converges uniformly to $X^\Theta(t,\cdot)^{-1}$ with rate $\sqrt{\e}$, see Step 4 in the proof of Proposition \ref{prop:main}.\\ The same convergence holds for $u^\e_\Phi$ towards $u_\Phi:=u_0((X^\Phi)^{-1})$.\\
Let $b_{\e,l}^\Theta,b_{\e,k}^\Phi\in C^\infty(\R^3)$ be regularizations of $b_\e^\Theta,b_\e^\Phi$. Since $b_\e^\Theta,b_\e^\Phi$ are in $BV_{\mathrm{loc}}(\R^3)$ for fixed $\e>0$, using Theorem $\ref{teo:stab}$ it follows that
\begin{equation*}
\begin{aligned}
& u^{\e,l}_\Theta \longrightarrow u_\Theta^\e \ \mathrm{in} \ L^\infty([0,T];L^\infty(\R^3)-w*)\cap L^\infty([0,T];L^1_{\mathrm{loc}}(\R^3)), \ \mathrm{as} \ l\to \infty, \\
& u^{\e,k}_\Phi \longrightarrow u_\Phi^\e \ \mathrm{in} \ L^\infty([0,T];L^\infty(\R^3)-w*)\cap L^\infty([0,T];L^1_{\mathrm{loc}}(\R^3)), \ \mathrm{as} \ k\to \infty.
\end{aligned}
\end{equation*}
Arguing as in the proof of Theorem \ref{teo:main2}, by a diagonal argument we can infer that there exist $\e_i,l_i,\e_j,k_j$ with $i,j\in\N$ such that
\begin{equation*}
\begin{aligned}
& u^{\e_i,l_i}_\Theta \longrightarrow u_\Theta \ \mathrm{in} \ L^\infty([0,T];L^\infty(\R^3)-w*)\cap L^\infty([0,T];L^1_{\mathrm{loc}}(\R^3)), \ \mathrm{as} \ i\to \infty, \\
& u^{\e_j,k_j}_\Phi \longrightarrow u_\Phi \ \mathrm{in} \ L^\infty([0,T];L^\infty(\R^3)-w*)\cap L^\infty([0,T];L^1_{\mathrm{loc}}(\R^3)), \ \mathrm{as} \ j\to \infty.
\end{aligned}
\end{equation*}
Since both $b_{\e,l}^\Theta,b_{\e,k}^\Phi$ strongly converge in $L^1_{\mathrm{loc}}(\R^3)$ to $b$, by merging $b_{\e_i,l_i}^\Theta,b_{\e_j,k_j}^\Phi$ and appropriately renaming the indexes we can infer that there exists $\{b_n\}_n$ as claimed in the statement of the theorem. Indeed, considering an initial datum $u_0\in \mathcal{I}\cap L^\infty(\R^3)$, it holds that $u_\Theta\neq u_\Phi$. Thus, they are actually two different solutions of (\ref{eq:te}). 
\end{proof}
Finally we prove the following corollary.
\begin{cor}\label{cor:visc}
There exist an autonomous divergence-free vector field $b\in L_{\mathrm{loc}}^{p}(\R^{3})$ with $p\in [1,\frac{4}{3}]$, a sequence of divergence-free vector fields $b_m\in C^\infty(\R^{3})$ converging to $b$ strongly in $L^p_{\mathrm{loc}}(\R^3)$, and an infinitesimal sequence $\{\delta_m\}_m\subset\R$ such that the following holds. Let $u_0\in \mathcal{I}\cap L^\infty(\R^3)$ be a given inital datum, and consider the Cauchy problem
\begin{equation}\label{eq:vvd}
\begin{cases}
\partial_t u_m +b_m\cdot\nabla u_m=\delta_m \Delta u_m,\\
u_m(0,\cdot)=u_0.
\end{cases}
\end{equation}
Then, there exist subsequences $m_i$ and $m_j$ such that the sequences $u_{m_i}$ and $u_{m_j}$, solutions of \eqref{eq:vvd}, converge in $L^\infty((0,T);L^\infty(\R^3)-w*)\cap L^\infty((0,T);L^1_{\mathrm{loc}}(\R^3))$ to two different limits, which are bounded distributional solutions of \eqref{eq:te}. 
\end{cor}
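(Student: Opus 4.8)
The plan is to reuse, without any change, the smooth sequence $\{b_n\}_n\subset C^\infty(\R^3)$, the datum $u_0\in\mathcal{I}\cap L^\infty(\R^3)$, the subsequences $n_i,n_j$ and the two distinct bounded distributional solutions $u_\Theta\ne u_\Phi$ of \eqref{eq:te} constructed in the proof of Theorem~\ref{teo:main1}, for which $u_{n_i}\to u_\Theta$ and $u_{n_j}\to u_\Phi$ in $L^\infty((0,T);L^\infty(\R^3)-w*)\cap L^\infty((0,T);L^1_{\mathrm{loc}}(\R^3))$, where $u_n$ denotes the unique solution of \eqref{eq:tee} with field $b_n$. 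To these data we add a viscosity that is \emph{negligible relative to the fixed field} $b_n$: for $\delta>0$ let $u_{n,\delta}$ be the unique bounded solution of $\partial_t u+b_n\cdot\nabla u=\delta\Delta u$ with datum $u_0$, and then choose $\delta_n\downarrow 0$ so fast that $u_{n,\delta_n}$ stays as close as we wish to $u_n$.

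The only new ingredient is the classical vanishing-viscosity statement for the \emph{smooth} field $b_n$: for each fixed $n$,
\[
u_{n,\delta}\longrightarrow u_n\quad\text{in }L^\infty((0,T);L^\infty(\R^3)-w*)\cap L^\infty((0,T);L^1_{\mathrm{loc}}(\R^3))\quad\text{as }\delta\to0.
\]
Since $b_n$ is a mollification of a bounded, divergence-free $BV_{\mathrm{loc}}$ field vanishing outside $\tilde P^\e$ (Proposition~\ref{prop:main}), it is bounded, smooth, divergence-free and globally Lipschitz with bounded derivatives of every order; the parabolic Cauchy problem is thus classically well posed, the maximum principle gives $\|u_{n,\delta}\|_{L^\infty((0,T)\times\R^3)}\le\|u_0\|_{L^\infty}$ uniformly in $\delta$, and $u_n$ is the classical solution furnished by the method of characteristics. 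The weak-$*$ convergence is immediate: letting $\delta\to0$ in the distributional formulation kills the viscous term, since $\left|\delta\iint u_{n,\delta}\Delta\varphi\,\de x\,\de t\right|\le\delta\|u_0\|_{L^\infty}\|\Delta\varphi\|_{L^1}\to0$, so every weak-$*$ limit point is a bounded distributional solution of \eqref{eq:te} with field $b_n$, hence equals $u_n$ by uniqueness (Theorem~\ref{teo:am} applies to the Lipschitz field $b_n$). The strong $L^\infty_tL^1_{\mathrm{loc}}$ convergence follows from the Feynman--Kac representation $u_{n,\delta}(t,x)=\mathbb{E}[u_0(Y^{\delta,x}_t)]$, with $\de Y_s=-b_n(Y_s)\,\de s+\sqrt{2\delta}\,\de B_s$ and $Y_0=x$: as $\delta\to0$ the paths converge, locally uniformly in $(t,x)$, to the deterministic inverse characteristics of $b_n$, so $u_{n,\delta}(t,x)\to u_n(t,x)$ pointwise, and together with the uniform $L^\infty$ bound and the uniform-in-$\delta$ equicontinuity in $t$ (also read off from the representation) this gives convergence in $C([0,T];L^q_{\mathrm{loc}})$ for every $q<\infty$. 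Alternatively, a localized energy estimate for $w=u_{n,\delta}-u_n$, which solves $\partial_t w+b_n\cdot\nabla w=\delta\Delta w+\delta\Delta u_n$ with $w|_{t=0}=0$, using $\dive b_n=0$ to dispose of the transport term and the smoothness of $u_n$ to bound the source $\delta\Delta u_n$ on compacta, controls $\sup_t\|w(t)\|_{L^2_{\mathrm{loc}}}$ by a constant times $\delta$.

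Granting this, the conclusion is a diagonal argument. Fix a metric $\mathsf d$ inducing the topology of $L^\infty((0,T);L^\infty(\R^3)-w*)\cap L^\infty((0,T);L^1_{\mathrm{loc}}(\R^3))$ on the ball $\{\|u\|_{L^\infty((0,T)\times\R^3)}\le\|u_0\|_{L^\infty}\}$, and for each $n$ choose, via the displayed limit, $\delta_n<1/n$ with $\mathsf d(u_{n,\delta_n},u_n)<1/n$. Renaming $m=n$, $\delta_m=\delta_n$, $b_m=b_n$ and $u_m=u_{m,\delta_m}$, we have $\delta_m\to0$, $b_m\to b$ in $L^p_{\mathrm{loc}}(\R^3)$ (Theorem~\ref{teo:main1}), and $u_m$ solves \eqref{eq:vvd}; along $m_i:=n_i$ and $m_j:=n_j$ the triangle inequality gives $\mathsf d(u_{m_i},u_\Theta)\le 1/n_i+\mathsf d(u_{n_i},u_\Theta)\to0$ and $\mathsf d(u_{m_j},u_\Phi)\le 1/n_j+\mathsf d(u_{n_j},u_\Phi)\to0$. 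Since $u_0\in\mathcal{I}$ forces $u_\Theta\ne u_\Phi$, and both are bounded distributional solutions of \eqref{eq:te}, this is exactly the claim.

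The main (in fact the only) obstacle is the vanishing-viscosity step. It is classical for Lipschitz fields, but one has to obtain it in \emph{precisely} the topology in which Theorem~\ref{teo:main1} is stated --- in particular the $L^\infty_tL^1_{\mathrm{loc}}$, not merely $L^1_tL^1_{\mathrm{loc}}$, mode --- which is what forces either the trajectorial argument (where the representation formula supplies the time-equicontinuity) or a localized energy estimate that properly accounts for the unbounded support of $b_n$ and $u_0$ (finite transport speed together with the Gaussian tails of the diffusion). Everything else is routine bookkeeping of nested limits.
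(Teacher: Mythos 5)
Your proposal is correct and follows essentially the same route as the paper: reuse the sequence $b_n$ and the two distinct limits $u_\Theta\neq u_\Phi$ from Theorem~\ref{teo:main1}, establish the vanishing-viscosity convergence $u_{n,\delta}\to u_n$ for each fixed smooth field $b_n$, and conclude by a diagonal argument. The only differences are that you spell out the vanishing-viscosity step (maximum principle, Feynman--Kac or localized energy estimate), which the paper merely asserts, and that you organize the diagonal extraction by choosing $\delta_n$ for every $n$ at once rather than running two separate diagonals along $n_i$ and $n_j$ and merging --- both harmless refinements.
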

\begin{proof}
Let $b$ be the vector field defined in \eqref{eq:b} and let $\Theta,\Phi \in (0,2\pi]$ with $\Theta\neq\Phi$. Let $b_n, u_{n_i},u_{n_j}$ be given by Theorem \ref{teo:main1}, we have that
\begin{equation}
\begin{aligned}
b_n \longrightarrow b \ \ \ \ \ &\mbox{in } L^1_{\mathrm{loc}}(\R^3), \mbox{ as } n\to \infty, \\
u_{n_i} \longrightarrow u_\Theta \ & \mbox{in } L^\infty((0,T);L^\infty(\R^3)-w*)\cap L^\infty([0,T];L^1_{\mathrm{loc}}(\R^3)),  \mbox{ as } i\to \infty,\\
u_{n_j} \longrightarrow u_\Phi \ & \mbox{in } L^\infty((0,T);L^\infty(\R^3)-w*)\cap L^\infty([0,T];L^1_{\mathrm{loc}}(\R^3)),  \mbox{ as } j\to \infty.
\end{aligned}
\end{equation}
Let us consider for $\delta>0$ the following Cauchy problems
\begin{equation}\label{eq:vvndeltai}
\begin{cases}
\partial_t u^\delta_{n_i}+b_{n_i}\cdot\nabla u^\delta_{n_i}=\delta\Delta u^\delta_{n_i},\\
u^\delta_{n_i}(0,\cdot)=u_0,
\end{cases}
\end{equation}
\begin{equation}\label{eq:vvndeltaj}
\begin{cases}
\partial_t u^\delta_{n_j}+b_{n_j}\cdot\nabla u^\delta_{n_j}=\delta\Delta u^\delta_{n_j},\\
u^\delta_{n_j}(0,\cdot)=u_0.
\end{cases}
\end{equation}
For every fixed $\delta>0$, by the smoothness of $b_{n_i}$ and $b_{n_j}$ we can infer that there exists a unique solution $u^\delta_{n_i}$ and $u^\delta_{n_j}$ respectively of \eqref{eq:vvndeltai} and \eqref{eq:vvndeltaj}. Moreover, for fixed $i$ and $j$, by letting $\delta\to 0$ we have
\begin{equation}
u^\delta_{n_i} \longrightarrow u_{n_i} \ \mathrm{in} \ L^\infty((0,T);L^\infty(\R^3)-w*)\cap  L^\infty([0,T];L^1_{\mathrm{loc}}(\R^3)),
\end{equation}
\begin{equation}
u^\delta_{n_j} \longrightarrow u_{n_j} \ \mathrm{in} \ L^\infty((0,T);L^\infty(\R^3)-w*)\cap L^\infty([0,T];L^1_{\mathrm{loc}}(\R^3)).
\end{equation}
By a diagonal argument we can infer that there exist $\delta_l, n_{i_l}, \delta_k, n_{j_k}$ such that
\begin{equation}
u_{n_{i_l}}^{\delta_l} \longrightarrow u_\Theta \ \mathrm{in} \ L^\infty((0,T);L^\infty(\R^3)-w*)\cap L^\infty([0,T];L^1_{\mathrm{loc}}(\R^3)), \ \mathrm{as} \ l\to\infty,
\end{equation}
\begin{equation}
u_{n_{j_k}}^{\delta_k} \longrightarrow u_\Phi \ \mathrm{in} \ L^\infty((0,T);L^\infty(\R^3)-w*)\cap L^\infty([0,T];L^1_{\mathrm{loc}}(\R^3)), \ \mathrm{as} \ k\to\infty.
\end{equation}
Finally, by merging the sequences and appropriately renaming the indexes we obtain the result.
\end{proof}

\subsection*{Acknowledgments}
This research has been supported by the ERC Starting Grant 676675 FLIRT. We thank the anonymous referee and L\`aszl\`o Sz\`ekelyhidi for the careful reading of the paper and for several useful comments.

\end{document}